	\def\MR#1{}
\newcommand{\kk}{\mathbb{k}}
\newcommand{\NN}{\normalfont\mathbb{N}}
\newcommand{\ZZ}{{\normalfont\mathbb{Z}}}
\newcommand{\mm}{{\normalfont\mathfrak{m}}}
\newcommand{\pp}{\mathfrak{p}}
\newcommand{\bb}{\mathfrak{b}}
\newcommand{\qqq}{\normalfont\mathfrak{q}}
\newcommand{\reg}{\normalfont\text{reg}}
\newcommand{\depth}{\normalfont\text{depth}}
\newcommand{\Tor}{\normalfont\text{Tor}}
\newcommand{\Ext}{\normalfont\text{Ext}}
\newcommand{\Coker}{\normalfont\text{Coker}}
\newcommand{\Hom}{\normalfont\text{Hom}}
\newcommand{\grHom}{{}^*\Hom}
\newcommand{\FF}{\mathcal{F}}
\newcommand{\HL}{\normalfont\text{H}_{\mm}}
\newcommand{\HH}{\normalfont\text{H}}
\newcommand{\iniTerm}{\normalfont\text{in}}
\newcommand{\Proj}{\normalfont\text{Proj}}
\newcommand{\Spec}{{\normalfont\text{Spec}}}
\newtheorem{theorem}{Theorem}[section]
\newtheorem{headthm}{Theorem}
\newaliascnt{headcor}{headthm}
\newaliascnt{headconj}{headthm}
\newaliascnt{corollary}{theorem}
\newtheorem{corollary}[corollary]{Corollary}
\newaliascnt{claim}{theorem}
\newaliascnt{lemma}{theorem}
\newtheorem{lemma}[lemma]{Lemma}
\newaliascnt{conjecture}{theorem}
\newaliascnt{proposition}{theorem}
\newtheorem{proposition}[proposition]{Proposition}
\theoremstyle{definition}
\newaliascnt{definition}{theorem}
\newtheorem{definition}[definition]{Definition}
\newaliascnt{notation}{theorem}
\newaliascnt{example}{theorem}
\newaliascnt{examples}{theorem}
\newaliascnt{remark}{theorem}
\newtheorem{remark}[remark]{Remark}
\newaliascnt{question}{theorem}
\newaliascnt{questions}{theorem}
\newaliascnt{problem}{theorem}
\newaliascnt{construction}{theorem}
\newaliascnt{setup}{theorem}
\newtheorem{setup}[setup]{Setup}
\newaliascnt{algorithm}{theorem}
\newaliascnt{observation}{theorem}
\newaliascnt{defprop}{theorem}
\def\equationautorefname~#1\null{(#1)\null}
\def\sectionautorefname~#1\null{Section #1\null}
\def\subsectionautorefname~#1\null{\S #1\null}
\begin{document}

\title{Fiber-full modules and a local freeness criterion for local cohomology modules}

\author{Yairon Cid-Ruiz}
\address{Department of Mathematics, KU Leuven, Celestijnenlaan 200B, 3001 Leuven, Belgium}
\email{yairon.cidruiz@kuleuven.be}

\begin{abstract}
	Let $R$ be a finitely generated positively graded algebra over a Noetherian local ring $B$, and $\mm = [R]_+$ be the graded irrelevant ideal of $R$.
	We provide a local criterion characterizing the $B$-freeness of all the local cohomology modules $\HL^i(M)$ of a finitely generated graded $R$-module $M$.
	We show that fiber-full modules are exactly the ones that satisfy this criterion.	
	When we change $B$ by an arbitrary Noetherian ring $A$, we study the fiber-full locus of a module in $\Spec(A)$: we show that the fiber-full locus is always an open subset of $\Spec(A)$ and that it is dense when $A$ is generically reduced.
\end{abstract}

\subjclass[2010]{13D45, 13C10, 13D07.}

\keywords{fiber-full module, local cohomology, Ext module,  freeness, base change, local duality, Gr\"obner degeneration, square-free.}

\maketitle



\section{Introduction}

Let $(B, \bb)$ be a Noetherian local ring, $R$ be a finitely generated positively graded $B$-algebra, and $\mm = [R]_+$ be the graded irrelevant ideal of $R$.  
This paper is motivated by the following two lines of research:

\begin{enumerate}[(a)]
	
	\item \label{line_a}
	Recently there has been a lot of interest in the closely related notions of \emph{algebras having liftable local cohomology} introduced by Koll\'ar and Kov\'acs \cite{KOLLAR_KOVACS}, and \emph{cohomologically full rings} introduced by Dao, De Stefani and Ma \cite{COHOM_FULL_RINGS}.
	The study of these concepts has produced a number of important results, see, e.g., \cite{KOLLAR_KOVACS, COHOM_FULL_RINGS, CONCA_VARBARO}.
	Of great interest to us is the important work of Koll\'ar and Kov\'acs \cite{KOLLAR_KOVACS} on the flatness and base change of the cohomologies of relative dualizing complexes.
	
	For the case of modules, the correct extension of these notions seems to be the notion of \emph{fiber-full modules} that was recently coined by Varbaro in \cite{CONFERENCE_LEVICO} (also, see \cite{HONGMIAO}).
	Here we introduce and study a notion of fiber full-modules which is relative over the base ring $B$.
	We say that a finitely generated graded $R$-module $M$ is \emph{fiber-full over $B$} if $M$ is a free $B$-module and the natural map 
	$$
	\HL^i\big(M/\bb^qM\big) \;\rightarrow\; \HL^i\big(M/\bb M\big)
	$$ 
	is surjective for all $i \ge 0$ and $q \ge 1$ (see \autoref{nota_fiber_full}; cf. \cite[Definition 3.8]{CONFERENCE_LEVICO}).
	
	\smallskip 
	\item \label{line_b}
	An important question is to characterize when the local cohomology modules $\HL^i(M)$ are free $B$-modules for a finitely generated graded $R$-module $M$.
	This question has been addressed in various contexts by Hochster and Roberts  \cite[Theorem 3.4]{HOCHSTER_ROBERTS_PURITY}, by Koll\'ar \cite[Theorem 78]{kollar2014maps}, by Smith \cite{KSMITH}, and by Chardin and Simis in joint work \cite{GEN_FREENESS_LOC_COHOM} with the author of this paper.
\end{enumerate}

Our main result is the following theorem that gives a local criterion characterizing the $B$-freeness of all the local cohomology modules $\HL^i(M)$.  
This result shows that fiber-full modules are \emph{exactly} the ones that enjoy the property that $M$ is a free $B$-module and $\HL^i(M)$ is a free $B$-module for all $i \ge 0$.
So, in a sense, it paves a bridge between \hyperref[line_a]{(a)} and  \hyperref[line_b]{(b)}.

\begin{headthm}[\autoref{thm_freeness_criterion}]
	\label{thmA}
	Let $(B, \bb)$ be a Noetherian local ring, $R$ be a finitely generated positively graded $B$-algebra, and $\mm = [R]_+$.  
	Let $T=B[x_1,\ldots,x_r]$ be a positively graded polynomial ring over $B$ such that we have a homogeneous surjection $T \twoheadrightarrow R$.
	Let $M$ be a finitely generated graded $R$-module and suppose that $M$ is a free $B$-module.
	Then, the following six conditions are equivalent:
	\begin{enumerate}[\rm (1)]
		\item $\HL^i(M)$ is a free $B$-module for all $0 \le i \le r$.
		\item \label{part2} $\Ext_T^i(M, T)$ is a free $B$-module for all $0 \le i \le r$.
		\item $\HL^i(M/\bb^q M)$ is a free $B/\bb^q$-module for all $0 \le i \le r$ and $q \ge 1$.
		\item $\Ext_{T/\bb^qT}^i(M/\bb^q M,  T/\bb^qT)$ is a free $B/\bb^q$-module for all $0 \le i \le r$ and $q \ge 1$.
		\item The natural map 
		$
		\HL^i\big(M/\bb^qM\big) \rightarrow \HL^i\big(M/\bb M\big)
		$ 
		is surjective for all $0 \le i \le r$ and  $q \ge 1$.
		\item The natural map 
		$
		\Ext_{T/\bb^qT}^{i}(M/\bb M, \omega_{T/\bb^qT})
		\rightarrow 
		\Ext_{T/\bb^qT}^{i}(M/\bb^qM, \omega_{T/\bb^qT})
		$ 
		is injective for all $0 \le i \le r$ and $q \ge 1$, where  $\omega_{T/\bb^qT}$ denotes the graded canonical module of $T/\bb^qT$.
	\end{enumerate}
	Moreover, when any of the above equivalent conditions is satisfied, we have the following isomorphisms 
	\begin{enumerate}[\rm (i)]
		\item $\HL^i(M) \; \cong \; \big(\Ext_T^{r-i}(M, T(-\delta))\big)^{*_B}$ where $\delta = \deg(x_1) + \cdots + \deg(x_r)$,
		\item $\HL^i(M) \otimes_B C \; \xrightarrow{\cong} \; \HL^i(M \otimes_B 
		C)$, and
		\item \label{partiii}  $\Ext_T^i(M, T) \otimes_B C \;\xrightarrow{\cong}\; \Ext_{T \otimes_B C}^i(M\otimes_B C, T\otimes_B C)$
	\end{enumerate}	
	for all integers $i$ and any $B$-algebra $C$.
\end{headthm}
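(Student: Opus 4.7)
The plan is to route every part of the theorem through graded local duality, applied to the Noetherian graded algebras $T/\bb^qT$ over the Artinian local bases $B/\bb^q$. For any finitely generated graded $T/\bb^qT$-module $N$, this yields a functorial isomorphism
\[
\HL^{i}(N) \;\cong\; \grHom_{B/\bb^q}\!\bigl(\Ext_{T/\bb^qT}^{r-i}\bigl(N,\omega_{T/\bb^qT}\bigr),\, E_{B/\bb^q}\bigr),
\]
where $E_{B/\bb^q}$ denotes the injective hull of the residue field of $B/\bb^q$. Since the graded Matlis dual over $B/\bb^q$ is an exact contravariant functor that interchanges surjections with injections, applying this to $N = M/\bb^qM$ and $N = M/\bb M$ shows that the map in condition (5) is the Matlis dual of the map in (6), yielding (5)$\Leftrightarrow$(6) immediately. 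The same duality transports freeness through the local flatness criterion $\Tor_1^{B/\bb^q}(-, B/\bb) = 0$, giving (3)$\Leftrightarrow$(4).

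Next I would reduce the unqualified statements (1)--(2) to their truncated counterparts (3)--(4) via base change. Because $T$ is $B$-flat and $M$ is $B$-free, a Tor-vanishing computation yields
\[
\Ext_T^{i}(M, T) \otimes_B (B/\bb^q) \;\cong\; \Ext_{T/\bb^qT}^{i}\bigl(M/\bb^q M,\, T/\bb^q T\bigr),
\]
so the local criterion of flatness reduces $B$-freeness of $\Ext_T^i(M,T)$ to $B/\bb^q$-freeness of each truncation, establishing (2)$\Leftrightarrow$(4). A parallel argument, combined with graded local duality over $T$ itself to express $\HL^i(M)$ as $\bigl(\Ext_T^{r-i}(M,\omega_T)\bigr)^{*_B}$, yields (1)$\Leftrightarrow$(2) and, in passing, the isomorphism (i).

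The core of the argument is then the implication (5)$\Rightarrow$(3), which I would establish by induction on $q$. The case $q=1$ is trivial. For $q \geq 2$, the $B$-freeness of $M$ produces the identification
\[
\bb^{q-1}M/\bb^qM \;\cong\; (\bb^{q-1}/\bb^q)\otimes_{B/\bb}(M/\bb M) \;\cong\; (M/\bb M)^{\oplus \ell}
\]
with $\ell = \dim_{B/\bb}(\bb^{q-1}/\bb^q)$. Applying $\HL^\bullet$ to the short exact sequence
\[
0 \to \bb^{q-1}M/\bb^qM \to M/\bb^qM \to M/\bb^{q-1}M \to 0
\]
produces a long exact sequence; the hypothesis (5), factored through $\HL^i(M/\bb^{q-1}M)$, forces the intermediate surjection $\HL^i(M/\bb^qM) \twoheadrightarrow \HL^i(M/\bb^{q-1}M)$, so all connecting homomorphisms vanish and one obtains short exact sequences
\[
0 \to \HL^i(M/\bb M)^{\oplus\ell} \to \HL^i(M/\bb^qM) \to \HL^i(M/\bb^{q-1}M) \to 0.
\]
By induction the right term is $B/\bb^{q-1}$-free and the left term is a $B/\bb$-vector space; a length/rank count combined with the $\bb^{q-1}$-multiplication map then forces the middle to be $B/\bb^q$-free, completing the induction. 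The remaining isomorphisms (ii) and (iii) are immediate consequences of $B$-freeness of $\Ext_T^i(M,T)$ and $\HL^i(M)$ together with the base-change formula above.

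The main obstacle I anticipate is the last step of the inductive argument, namely deducing $B/\bb^q$-freeness of the middle term from the displayed short exact sequence. One must verify that the extension is $B/\bb^q$-linear in a way that really yields a free module, not merely that the additive length is correct—this requires tracking the $\bb$-adic filtration compatibly across the long exact sequence. If this becomes awkward to carry out directly on the cohomology side, I would dualize the whole induction to the Ext side via (6)$\Rightarrow$(4), where finite generation of the Ext modules makes the length bookkeeping and the local criterion of flatness considerably more transparent.
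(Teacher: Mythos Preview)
Your proof has a genuine gap in the step (4)$\Rightarrow$(2), and hence in passing from the truncated conditions back to the absolute ones. The base-change isomorphism
\[
\Ext_T^{i}(M,T)\otimes_B B/\bb^q \;\cong\; \Ext_{T/\bb^qT}^{i}\bigl(M/\bb^qM,\,T/\bb^qT\bigr)
\]
does \emph{not} follow from the $B$-flatness of $T$ and $M$ alone. Those hypotheses only guarantee that $F_\bullet\otimes_B B/\bb^q$ resolves $M/\bb^qM$, so the right-hand side is $\HH^i\bigl(\Hom_T(F_\bullet,T)\otimes_B B/\bb^q\bigr)$; comparing this with $\HH^i\bigl(\Hom_T(F_\bullet,T)\bigr)\otimes_B B/\bb^q$ involves precisely the $\Tor$ groups of the $\Ext_T^j(M,T)$ whose vanishing you are trying to establish, so the argument is circular. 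The paper closes this gap by an entirely different route: it proves a Theorem on Formal Functions for local cohomology, giving ${[\HL^i(M)]}_\nu^{\,\widehat{}}\;\cong\;\underset{\leftarrow}{\lim}\,{[\HL^i(M/\bb^qM)]}_\nu$, and then invokes a flatness criterion for inverse limits of compatible free modules to deduce (1) from (3). A related issue affects your (1)$\Leftrightarrow$(2): there is no off-the-shelf graded local duality over $T$ when $B$ is merely Noetherian local rather than Artinian; the isomorphism $\HL^i(M)\cong\bigl(\Ext_T^{r-i}(M,T(-\delta))\bigr)^{*_B}$ is itself conclusion (i) of the theorem, and the paper establishes it via a spectral-sequence argument that already uses the freeness hypotheses.

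For (5)$\Rightarrow$(3) your inductive skeleton is in the right spirit, and the intermediate surjection $\HL^i(M/\bb^qM)\twoheadrightarrow\HL^i(M/\bb^{q-1}M)$ can indeed be justified by Nakayama once (3) is known for $q-1$. But the obstacle you flag is real: the short exact sequence together with a length count does not force the middle term to be $B/\bb^q$-free (over $k[t]/(t^2)$, both $k[t]/(t^2)$ and $k\oplus k$ have length~$2$). What is missing is the identification of the image of $\HL^i(\bb^{q-1}M/\bb^qM)$ inside $\HL^i(M/\bb^qM)$ with $\bb^{q-1}\cdot\HL^i(M/\bb^qM)$, not merely a containment. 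The paper's \autoref{prop_free_impl_fiber_full} handles this by inducting instead on the length of an arbitrary Artinian quotient $C$ of $B$ and working with a single socle element $t\in C$: it first proves $t\cdot\HL^i(\overline{M})=\HL^i(t\overline{M})$ as submodules, then bootstraps to the full filtration before applying the local flatness criterion. Dualizing to the Ext side, as you propose, does not sidestep this identification problem.
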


A couple of words regarding \autoref{thmA} are in place.
Because of the above theorem, fiber-full modules get six equivalent desirable definitions, and this makes them quite a versatile class of modules.
For fiber-full modules, we obtain a graded local duality theorem relative to the base ring $B$ (see \autoref{def_relative_Matlis}) and base change isomorphisms for all $\HL^i(M)$ and $\Ext_T^i(M, T)$.
Consider the morphism $f : X = \Spec(R) \rightarrow Y = \Spec(B)$ and the relative dualizing complex $\omega_{X/Y}^\bullet$, and suppose that $f$ is flat.
Since $f$ is embeddable into the smooth morphism $\mathbb{A}_B^r = \Spec(T) \rightarrow \Spec(B)$, it follows that the $-i$-th cohomology $h^{-i}(\omega_{X/Y}^\bullet)$ of $\omega_{X/Y}^\bullet$ coincides with the sheafification of $\Ext_T^{r-i}(R, T)$.
Therefore, in our current setting when we consider the special case $M = R$, parts \hyperref[part2]{(2)} and \hyperref[partiii]{(iii)} of \autoref{thmA} recover the result of Koll\'ar and Kov\'acs \cite{KOLLAR_KOVACS} on the flatness and base change of $h^{-i}(\omega_{X/Y}^\bullet)$.

\smallskip

Next, we concentrate on some applications of the theorem above. 

\smallskip

We now work over an arbitrary Noetherian ring, and we are interested in studying the locus of fiber-fullness of a module.
Let $A$ be a Noetherian ring, $R$ be a finitely generated positively graded $A$-algebra, and  $\mm = [R]_+$.  
For any $\pp \in \Spec(A)$, let $k(\pp) = A_\pp/\pp A_\pp$ be the residue field of $\pp$.
For a finitely generated graded $R$-module $M$, we define the \emph{fiber-full locus} of $M$ by
$$
\mathfrak{Fib}(M) \; = \; \big\{ \pp \in \Spec(A) \mid M \otimes_{A} A_\pp \text{ is fiber-full over } A_\pp \big\}.
$$
Interestingly, it turns out that the fiber-full locus is very well-behaved: $\mathfrak{Fib}(M)$ is always an open subset, and if $A$ is generically reduced then $\mathfrak{Fib}(M)$ is also dense.
The following theorem gives some results regarding the fiber-full locus of a module.

\begin{headthm}[\autoref{thm_locus}]	
	\label{thmB}
	Let $A$ be a Noetherian ring, $R$ be a finitely generated positively graded $A$-algebra, and  $\mm = [R]_+$.  
		Let $M$ be a finitely generated graded $R$-module.
	Then, the following statements hold:
	\begin{enumerate}[\rm (i)]
		\item $\mathfrak{Fib}(M)$ is an open subset of $\Spec(A)$.
		\item If $A$ is generically reduced, then there is an element $a \in A$ avoiding the minimal primes of $A$ such that $\HL^i(M) \otimes_{A} A_a$ is a projective $A_a$-module for all $i \ge 0$, and so $\mathfrak{Fib}(M)$ is a dense subset of $\Spec(A)$.
		\item For all $i \ge 0$ and $\nu \in \ZZ$ the function
		$$
		\Spec(A) \rightarrow \NN, \quad \pp \;\mapsto\; \dim_{k(\pp)}\left( \left[\HL^i(M \otimes_{A} k(\pp))\right]_\nu \right)
		$$		
		is locally constant on $\mathfrak{Fib}(M)$.
	\end{enumerate}
\end{headthm}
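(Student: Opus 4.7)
The plan is to reduce all three parts to statements about the finitely generated graded $T$-modules $M$ and $N_i := \Ext_T^i(M,T)$ for $0 \le i \le r$, and then invoke Theorem A pointwise. Since $\Ext$ of a finitely generated module commutes with localization, Theorem A applied at each $A_\pp$ identifies $\mathfrak{Fib}(M)$ with the set of $\pp \in \Spec(A)$ such that $M_\pp$ and each $(N_i)_\pp$ is a free $A_\pp$-module.

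For part (i), I plan to prove the stronger claim that the $A$-free locus of any finitely generated graded $T$-module $N$ is open in $\Spec(A)$; then $\mathfrak{Fib}(M)$ is the intersection of finitely many such open sets. By graded Nakayama, $A_\pp$-freeness of $N_\pp$ amounts to $A_\pp$-freeness of each graded piece $[N]_\nu \otimes_A A_\pp$. To handle all $\nu$ uniformly, I would combine (a) openness of the $A$-flat locus of the coherent sheaf $\widetilde N$ on the proper $A$-scheme $\Proj(T)$ (yielding the conclusion in degrees $\nu \gg 0$ via cohomology and base change together with Serre vanishing) with (b) openness of the $A$-flat locus of each of the remaining finitely many low-degree graded pieces, each of which is an honest finitely generated $A$-module. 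This infinite-conjunction issue is the main obstacle, and indeed the only step where substantive work beyond the invocation of Theorem A is required.

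For part (ii), since $A$ is generically reduced, its localization at any minimal prime is a field, over which all modules are free. Applying Grothendieck's generic freeness lemma to each of the finitely many finitely generated graded $T$-modules $M, N_0, \ldots, N_r$ and taking a common multiple, I obtain a single $a \in A$ avoiding every minimal prime such that each of these modules becomes graded $A_a$-free. By Theorem A, $D(a) \subseteq \mathfrak{Fib}(M)$, which establishes density. Projectivity of $\HL^i(M) \otimes_A A_a$ follows from the relative graded local duality of Theorem A(i): it identifies $\HL^i(M) \otimes_A A_a$ with the graded Matlis dual of a suitable shift of $N_{r-i} \otimes_A A_a$, and the degree-wise dual of a graded-free module with finitely generated graded pieces is again graded-free, hence projective.

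For part (iii), the base change isomorphism of Theorem A(ii) gives, for any $\pp \in \mathfrak{Fib}(M)$ and $\nu \in \ZZ$, the identification
$$
\bigl[\HL^i(M \otimes_A k(\pp))\bigr]_\nu \;\cong\; [\HL^i(M)]_\nu \otimes_A k(\pp).
$$
The graded piece $[\HL^i(M)]_\nu$ is a finitely generated $A$-module, and it is $A_\pp$-free because $\HL^i(M)_\pp$ is graded $A_\pp$-free by Theorem A. Its $k(\pp)$-dimension therefore equals its $A_\pp$-rank, which is a locally constant function on the free locus of a finitely generated $A$-module, and in particular locally constant on $\mathfrak{Fib}(M)$.
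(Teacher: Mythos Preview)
Your overall strategy---identify $\mathfrak{Fib}(M)$ with the intersection of the $A$-free loci of $M$ and of the finitely many modules $N_i=\Ext_T^i(M,T)$ via Theorem~A, then argue locus by locus---is exactly the paper's approach, and your treatments of (i) and (iii) are correct and essentially match the paper's (the paper simply cites the openness of the free locus of a finitely generated graded module rather than reproving it via $\Proj(T)$ and cohomology-and-base-change as you sketch).

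The genuine gap is in (ii). Grothendieck's generic freeness lemma, as usually stated, requires $A$ to be a domain; it does not produce a single $a$ avoiding all minimal primes over a ring that is merely generically reduced. For instance, take $A=k[x,y]/(xy)$ and $N=A/(x)$: both minimal primes $(x),(y)$ lie in the free locus of $N$, yet for any $a\notin(x)\cup(y)$ the open set $D(a)$ contains both generic points, where $N$ has ranks $1$ and $0$, so $N_a$ is never $A_a$-free (it is at best projective after shrinking further). Thus your claim that the $N_i$ become \emph{free} over $A_a$ fails in general, and with it your local-duality deduction of projectivity of $\HL^i(M)\otimes_A A_a$ from freeness of $N_{r-i}\otimes_A A_a$.

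The fix is exactly what the paper does: use part (i) instead of generic freeness. Since $A_{\pp_j}$ is a field for each minimal prime $\pp_j$, every such $\pp_j$ lies in $\mathfrak{Fib}(M)$; writing $\mathfrak{Fib}(M)=\Spec(A)\setminus V(I)$, one has $I\not\subset\pp_j$ for all $j$, and prime avoidance yields $a\in I\setminus\bigcup_j\pp_j$. Then $D(a)\subset\mathfrak{Fib}(M)$, so condition (1) of Theorem~A at each $\pp\in D(a)$ gives that $\HL^i(M)_\pp$ is $A_\pp$-free; hence each graded piece of $\HL^i(M)\otimes_A A_a$ is a finitely generated locally free $A_a$-module, i.e.\ projective. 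Note that the theorem only asserts projectivity, not freeness, of $\HL^i(M)\otimes_A A_a$, and that is all one obtains.
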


Finally, as a consequence of our methods, in \autoref{thm_grobner_deform} we give an alternative proof of an important result of Conca and Varbaro \cite{CONCA_VARBARO}, where they showed that for a given homogeneous ideal $I \subset S = \kk[x_1,\ldots,x_r]\; (\mm = (x_1,\ldots,x_r))$ the Hilbert functions of $\HL^i(S/I)$ and $\HL^i(S/\iniTerm_<(I))$ coincide provided the initial ideal $\iniTerm_<(I)$ is square-free.
This result settled a previous conjecture of Herzog.

\smallskip

The basic outline of this paper is as follows.
In \autoref{sect_criterion}, we prove our main result, that is, we give the  proof of \autoref{thmA}.
In \autoref{sect_fib_full}, we obtain \autoref{thmB}.
Finally, in \autoref{sect_grob_degen}, we study square-free Gr\"obner degenerations.

\section{A local freeness criterion for local cohomology  modules}
\label{sect_criterion}

In this section, we provide a local criterion for the freeness of local cohomology modules.
Throughout this section, the following setup is set in place.

\begin{setup}
	\label{setup_local_criterion}
	Let $(B, \bb)$ be a Noetherian local ring.
	Let $R$ be a finitely generated positively graded $B$-algebra.
	Let $T = B[x_1,\ldots,x_r]$ be a positively graded polynomial ring over $B$ (that is, $[T]_0 = B$ and $\deg(x_i) > 0$),  such that we have a homogeneous surjection $T \twoheadrightarrow R$.
	Let $\mm = (x_1, \dots, x_r) \subset T$ be the graded irrelevant ideal of $T$.
	By abuse of notation, interchangeably, $\mm$ will also denote the graded irrelevant ideal $\mm R = [R]_+ = \bigoplus_{\nu \ge 1} [R]_\nu$ of $R$.
	For each $q \ge 1$, let $B_q := B/\bb^q$,  $R_q := R \otimes_{B} B_q$ and  $T_q := T \otimes_{B} B_q\cong B_q[x_1,\ldots,x_r]$.
	Let $\delta :=  \deg(x_1) + \cdots + \deg(x_r) \in \NN$.
\end{setup}

\begin{remark}\label{rem_basic}
	We begin by recalling the following basic facts:
	\begin{enumerate}[(i)]
		\item For any $R$-module $M$, by the Independence Theorem (see \cite[Theorem 4.2.1]{Brodmann_Sharp_local_cohom}) we can compute  $\HL^i(M)$ by considering $M$ as a $T$-module.
		\item $\HL^{r}(T) \cong \frac{1}{x_1\cdots x_r}B[x_1^{-1},\ldots,x_r^{-1}]$ and $\HL^i(T) = 0$ for all $i \neq r$.
		\item Since $B$ is a Noetherian local ring, there is no distinction between the notions of $B$-flat, $B$-projective and $B$-free for a finitely generated $B$-module (see, e.g., \cite[\S 7]{MATSUMURA}).
		It then follows that the properties of $B$-flat, $B$-projective and $B$-free are equivalent for a  finitely generated graded $R$-module. 
		\item $\HL^i(M)=0$ for all $i \ge r+1$ and any $R$-module $M$.
		\item If $M$ is a finitely generated graded $R$-module, then for each $\nu \in \ZZ$ the graded component ${[\HL^i(M)]}_\nu$ is a finitely generated $B$-module (see, e.g., \cite[Theorem III.5.2]{HARTSHORNE}, \cite[Theorem 2.1]{CHARDIN_POWERS_IDEALS}).
		So the conditions $B$-flat, $B$-projective and $B$-free are equivalent for  $\HL^i(M)$.
	\end{enumerate}
\end{remark}

Our main object of study is the following interesting class of modules (cf. \cite[Definition 3.8]{CONFERENCE_LEVICO}). 
We introduce a notion which is relative over the base ring $B$.

\begin{definition}
	\label{nota_fiber_full}
	 A finitely generated graded $R$-module $M$ is \emph{fiber-full over $B$} if $M$ is a free $B$-module and the natural map 
		$
		\HL^i(M / \bb^qM) \rightarrow \HL^i(M / \bb M)
		$
		is surjective for all $i \ge 0$ and $q \ge 1$.
\end{definition}

An equivalent definition for the notion of fiber-full is given by the following lemma.

\begin{lemma}
	\label{lem_fiber_full_def_Ext}
	Let $M$ be a finitely generated graded $R$-module.
	Then, the natural map 
	$
	\HL^i(M / \bb^qM) \rightarrow \HL^i(M / \bb M)
	$
	is surjective if and only if the natural map 
	$
	\Ext_{T_q}^{r-i}(M/\bb M, \omega_{T_q})
	\rightarrow 
	\Ext_{T_q}^{r-i}(M/\bb^qM, \omega_{T_q})
	$ 
	is injective, where $\omega_{T_q}$ denotes the graded canonical module of $T_q$.
\end{lemma}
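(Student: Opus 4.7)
The plan is to derive this equivalence from graded local duality over $T_q = B_q[x_1,\ldots,x_r]$, viewed as a positively graded polynomial ring over the Artinian local ring $B_q = B/\bb^q$. Graded local duality in this setting provides, for any finitely generated graded $T_q$-module $N$, a natural isomorphism
$$
\Ext_{T_q}^{r-i}(N,\, \omega_{T_q}) \;\cong\; \grHom_{B_q}\bigl(\HL^i(N),\; E_{B_q}\bigr),
$$
where $E_{B_q}$ denotes the injective hull of the residue field of $B_q$. I would apply this with $N = M/\bb M$ and with $N = M/\bb^qM$, both of which are finitely generated graded $T_q$-modules since $\bb^q$ annihilates each of them.

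Next I would check that the two natural maps in the statement correspond to one another under this duality. The canonical surjection $\pi \colon M/\bb^qM \twoheadrightarrow M/\bb M$ induces, by covariant functoriality of $\HL^i$, the map $\HL^i(\pi)\colon \HL^i(M/\bb^qM) \to \HL^i(M/\bb M)$, and by contravariant functoriality of $\Ext_{T_q}^{r-i}(-,\omega_{T_q})$, the map $\Ext_{T_q}^{r-i}(M/\bb M, \omega_{T_q}) \to \Ext_{T_q}^{r-i}(M/\bb^qM, \omega_{T_q})$. The naturality of graded local duality in the module variable $N$ identifies the latter map with $\grHom_{B_q}(\HL^i(\pi),\, E_{B_q})$.

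Finally, I would invoke the perfection and exactness of Matlis duality over the Artinian local base. By \autoref{rem_basic} each graded component $[\HL^i(N)]_\nu$ is a finitely generated $B_q$-module, hence of finite length since $B_q$ is Artinian; on such modules the functor $\grHom_{B_q}(-, E_{B_q})$ is an exact, involutive, contravariant equivalence. Consequently, a graded morphism between such modules is surjective in every degree if and only if its Matlis dual is injective in every degree. Applying this degree-wise to $\HL^i(\pi)$ converts the surjectivity condition on $\HL^i(M/\bb^qM) \to \HL^i(M/\bb M)$ into the injectivity condition on the dual Ext map, yielding the lemma.

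The only mildly delicate step will be quoting graded local duality in exactly the right form, since the base $B_q$ is Artinian but generally not Gorenstein, so $\omega_{T_q}$ is not simply a twist of $T_q$; however, this form of the duality is standard (and is precisely the graded duality relative to $B$ alluded to in \autoref{def_relative_Matlis} of the paper), so this presents no real obstruction. Everything else is formal naturality combined with the exactness of Matlis duality over an Artinian local ring.
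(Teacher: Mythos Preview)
Your argument is correct and is essentially the paper's own proof: both reduce the claim to graded local duality over the ${}^*$complete ${}^*$local Cohen--Macaulay ring $T_q$ together with the faithful exactness of Matlis duality over the Artinian base $B_q$. The only cosmetic difference is that the paper quotes Bruns--Herzog's duality with respect to the full ${}^*$maximal ideal $(\bb+\mm)T_q$, obtaining $\HH_{\bb+\mm}^i$ rather than $\HL^i$, and then identifies $\HH_{\bb+\mm}^i(M/\bb^qM)\cong\HL^i(M/\bb^qM)$ via the degenerate spectral sequence $\HL^i(\HH_\bb^j(-))\Rightarrow\HH_{\bb+\mm}^{i+j}(-)$ (using that $M/\bb^qM$ is $\bb$-torsion), a step you absorb into the form of duality you quote. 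One small inaccuracy: your parenthetical pointer to \autoref{def_relative_Matlis} is off, since that definition concerns ${}^*\Hom_B(-,B)$ rather than ${}^*\Hom_{B_q}(-,E_{B_q})$; the relevant reference is the graded local duality of \cite[Theorem~3.6.19]{BRUNS_HERZOG}, exactly as the paper uses.
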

\begin{proof}
	Note that $T_q$ is a Cohen-Macaulay ${}^{*}$complete ${}^{*}$local ring with ${}^{*}$maximal ideal $(\bb + \mm) T_q$ of ${}^{*}$dimension $r$ (under the notations of \cite{BRUNS_HERZOG}).
	Thus, the graded local duality theorem \cite[Theorem  3.6.19]{BRUNS_HERZOG} implies that the natural map $\HH_{\bb+\mm}^i(M / \bb^qM) \rightarrow \HH_{\bb+\mm}^i(M / \bb M)$ is surjective if and only if the natural map $\Ext_{T_q}^{r-i}(M/\bb M, \omega_{T_q})
	\rightarrow 
	\Ext_{T_q}^{r-i}(M/\bb^qM, \omega_{T_q})$ is injective.
	Since $M / \bb^qM$ is $\bb$-torsion, the spectral sequence $E_2^{i,j} = \HL^i\big( \HH_\bb^j(M / \bb^qM) \big) \Rightarrow \HH_{\bb+\mm}^{i+j}(M / \bb^qM)$ yields the isomorphism $\HL^i(M / \bb^qM) \cong \HH_{\bb+\mm}^i(M / \bb^qM)$.
	So, the result is clear.
\end{proof}

An explicit computation of the graded canonical module $\omega_{T_q}$ is given in the remark below.

\begin{remark}
	\label{rem_canon_mod}
	As in the proof of \autoref{lem_fiber_full_def_Ext}, we have $\HH_{\bb+\mm}^r(T_q) \cong \HL^r(T_q) \cong \frac{1}{x_1\cdots x_r}B_q[x_1^{-1},\ldots,x_r^{-1}]$.
	Consequently, \cite[Theorem  3.6.19]{BRUNS_HERZOG} gives the following isomorphism
	$$
	\omega_{T_q} \;\cong\; \bigoplus_{\nu \in \ZZ}  \Hom_{B_q}\left(\left[\frac{1}{x_1\cdots x_r}B_q[x_1^{-1},\ldots,x_r^{-1}]\right]_{-\nu}, \omega_{B_q}\right) \;\cong \; T_q(-\delta) \otimes_{B_{q}} \omega_{B_q}
	$$
	where $\omega_{B_q}$ denotes the canonical module of the Artinian local ring $B_q$.
\end{remark}

For completeness, we show below that \autoref{nota_fiber_full} agrees with \cite[Definition 3.8]{CONFERENCE_LEVICO} under a general common setting.

\begin{remark}
	Let $C = \kk[t]$ and  $P = C[x_1,\ldots,x_r]$ be a positively graded polynomial ring over $C$.
	In \cite[Definition 3.8]{CONFERENCE_LEVICO} a finitely generated graded $P$-module $M$ is called \emph{fiber-full} if $M$ is $C$-flat and the natural map $\Ext_P^i(M/tM,P) \rightarrow \Ext_P^i(M/t^qM,P)$ is injective for all $i \ge 0, q \ge 1$.
	By a theorem of Rees (see, e.g., \cite[Lemma 3.1.16]{BRUNS_HERZOG}) this is equivalent to saying that $\Ext_{P/t^qP}^{i-1}(M/tM,P/t^qP) \rightarrow \Ext_{P/t^qP}^{i-1}(M/t^qM,P/t^qP)$ is injective for all $i \ge 0, q \ge 1$.
	Thus the sought equivalence between \autoref{nota_fiber_full} and \cite[Definition 3.8]{CONFERENCE_LEVICO} is given by \autoref{lem_fiber_full_def_Ext} and \autoref{rem_canon_mod}.
\end{remark}

\begin{definition}
	\label{def_relative_Matlis}
	For a graded $T$-module  $M$, we denote the \emph{$B$-relative graded Matlis dual} by
	$$
	\left(M\right)^{*_B}={}^*\Hom_B(M, B): = \bigoplus_{\nu \in \ZZ} \Hom_B\left({\left[M\right]}_{-\nu}, B\right).
	$$ 
\end{definition}

Note that $\left(M\right)^{*_B}$ has a natural structure of graded $T$-module.
From the canonical perfect pairing of free $B$-modules in ``top'' local cohomology
$$
{\left[T\right]}_\nu \otimes_{B} {\left[\HL^{r}(T)\right]}_{-\delta-\nu} \rightarrow {\left[\HL^{r}(T)\right]}_{-\delta} \cong B
$$
we obtain a canonical graded $R$-isomorphism 
$
\HL^{r}(T) \cong {\left(T(-\delta)\right)}^{*_B} = \grHom_B\left(T(-\delta), B\right).	
$
Then, for a graded complex $F_\bullet$ of finitely generated free $T$-modules, we obtain the isomorphism of complexes 
\begin{equation}
	\label{eq_isom_complexes}
	\HL^{r}(F_\bullet) \;\cong\; {\Big(\Hom_T(F_\bullet
		, T(-\delta))\Big)}^{*_B}.
\end{equation}
The lemma below is an important tool for us.
\begin{lemma}
	\label{lem_compute_loc_cohom}
	Let $M$ be a finitely generated graded $R$-module.
	Let $F_\bullet$ be a graded free $T$-resolution of $M$ by modules of finite rank.
	If $M$ is a free $B$-module, then  
	$
	\HL^i(M \otimes_{B} C) \cong \HH_{r-i}\big(\HL^{r}(F_\bullet) \otimes_{B} C\big)
	$
	for any $B$-algebra $C$ and all integers $i$.
\end{lemma}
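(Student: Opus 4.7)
The plan is to reduce the statement to a standard hyper-derived functor computation that exploits the fact that free $T$-modules have local cohomology concentrated in the single degree $r$ (\autoref{rem_basic}(ii,iv)).

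First I would check that $F_\bullet \otimes_B C$ is a graded free resolution of $M \otimes_B C$ over $T_C := T \otimes_B C$. Since each $F_i$ is $T$-free and hence $B$-flat, we have $F_\bullet \otimes_B^{L} C \simeq F_\bullet \otimes_B C$; since $M$ is a free $B$-module, $M \otimes_B^{L} C \simeq M \otimes_B C$. Combining these with the quasi-isomorphism $F_\bullet \simeq M$ yields a quasi-isomorphism $F_\bullet \otimes_B C \simeq M \otimes_B C$ in the derived category. By \autoref{rem_basic}(i) we may then compute $\HL^i(M \otimes_B C)$ using the $T_C$-module structure.

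Next I would apply the extended \v{C}ech complex $\check{C}^\bullet$ on $x_1, \ldots, x_r \in T_C$ to $F_\bullet \otimes_B C$ and analyse the resulting double complex $\check{C}^\bullet \otimes_{T_C} (F_\bullet \otimes_B C)$. One of its spectral sequences abuts to $\HL^i(M \otimes_B C)$, via the standard identity $\HL^i = \HH^i(\check{C}^\bullet \otimes_{T_C} -)$ and the first step. The other spectral sequence degenerates: by additivity in the finitely generated free $T$-module $F_i$, together with the explicit formula
$$
\HL^r(T_C) \;\cong\; \frac{1}{x_1 \cdots x_r}\, C[x_1^{-1}, \ldots, x_r^{-1}] \;\cong\; \HL^r(T) \otimes_B C,
$$
each column $\check{C}^\bullet \otimes_{T_C} (F_i \otimes_B C)$ has cohomology concentrated in degree $r$ and equal to $\HL^r(F_i) \otimes_B C$. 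Equating the two abutments, and reindexing so that the cohomological degree $r$ of local cohomology corresponds to the homological degree $0$ of $F_\bullet$, gives precisely
$$
\HL^i(M \otimes_B C) \;\cong\; \HH_{r-i}\bigl(\HL^r(F_\bullet) \otimes_B C\bigr).
$$

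The main point that is not automatic is ensuring that $F_\bullet \otimes_B C$ remains acyclic in positive degree; this is exactly where the $B$-freeness hypothesis on $M$ is indispensable, as it rules out higher $\Tor_j^B(M,C)$ terms appearing in the homology of $F_\bullet \otimes_B C$. Once this is in hand, the degeneration of the second spectral sequence is routine from $\HL^j(T_C) = 0$ for $j \neq r$, and the two abutments can be directly compared.
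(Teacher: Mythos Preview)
Your argument is correct and is precisely the standard double-complex/spectral-sequence computation one expects here; the paper itself does not spell out a proof but simply cites \cite[Lemma~3.4]{GEN_FREENESS_LOC_COHOM}, whose proof follows the same line you describe.
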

\begin{proof}
	See \cite[Lemma 3.4]{GEN_FREENESS_LOC_COHOM}.
\end{proof}

The next  lemma provides a known base change isomorphism for local cohomology.

\begin{lemma}
	\label{lem_base_ch_loc}
	Let $M$ be a finitely generated graded $R$-module.
	Suppose that $M$ is a free $B$-module and $\HL^i(M)$ is a free $B$-module for all $i \ge 0$.
	Then 
	$$
	\HL^i(M)  \otimes_{B} C  \;\xrightarrow{\cong}\; \HL^i(M \otimes_{B} C)
	$$ 
	for all $i \ge 0$ and any $B$-algebra $C$.
\end{lemma}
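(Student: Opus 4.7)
The plan is to reduce the asserted base-change isomorphism to a standard homological principle about complexes of flat modules, using \autoref{lem_compute_loc_cohom} as the bridge. Since $T$ is Noetherian and $M$ is finitely generated, choose a graded free $T$-resolution $F_\bullet$ of $M$ by modules of finite rank. Applying \autoref{lem_compute_loc_cohom} to the trivial $B$-algebra $B$ gives $\HL^i(M) \cong H_{r-i}(\HL^r(F_\bullet))$, while applying it to the given $B$-algebra $C$ gives $\HL^i(M \otimes_B C) \cong H_{r-i}(\HL^r(F_\bullet) \otimes_B C)$. The problem is thereby reduced to showing that the natural map
\[
H_{r-i}\bigl(\HL^r(F_\bullet)\bigr) \otimes_B C \;\longrightarrow\; H_{r-i}\bigl(\HL^r(F_\bullet) \otimes_B C\bigr)
\]
is an isomorphism for every $i$ and every $B$-algebra $C$.

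Next I would verify that the complex $\HL^r(F_\bullet)$ satisfies the correct flatness hypotheses. Each $F_j$ is a finite direct sum of shifts of $T$, so $\HL^r(F_j)$ is a corresponding finite direct sum of shifts of $\HL^r(T) \cong \frac{1}{x_1\cdots x_r}B[x_1^{-1},\ldots,x_r^{-1}]$; every internal graded piece of this last module is a finitely generated free $B$-module, so the terms of the complex $\HL^r(F_\bullet)$ are all $B$-flat. The hypothesis that $\HL^i(M)$ is $B$-free for all $i$, combined with the identification $H_{r-i}(\HL^r(F_\bullet)) \cong \HL^i(M)$, further says that every homology module of $\HL^r(F_\bullet)$ is $B$-flat.

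Finally I would invoke the following standard principle: if $K_\bullet$ is a complex of flat $B$-modules whose homology in every degree is flat, then the canonical map $H_j(K_\bullet) \otimes_B C \to H_j(K_\bullet \otimes_B C)$ is an isomorphism for every $B$-algebra $C$ and every $j$. One derivation uses the hyper-$\Tor$ spectral sequence: the termwise flatness of $K_\bullet$ means that $K_\bullet \otimes_B C$ already represents $K_\bullet \otimes_B^{\mathbf{L}} C$, while the flatness of the homology forces $\Tor_{>0}^B(H_j(K_\bullet), C) = 0$, so the spectral sequence collapses to the desired isomorphism. The only mild obstacle worth mentioning is that $F_\bullet$ need not be bounded when $B$ has infinite global dimension; however, since the argument can be carried out degree-wise in the internal grading—where each component of $\HL^r(F_\bullet)$ is a finitely generated free $B$-module—no convergence issues arise, and one can alternatively truncate around the cohomological degree of interest to reduce to a bounded complex without affecting the relevant homology.
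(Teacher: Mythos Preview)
Your proof is correct and follows essentially the same route as the paper: both reduce the claim via \autoref{lem_compute_loc_cohom} to the base-change statement $\HH_i(G_\bullet)\otimes_B C \cong \HH_i(G_\bullet\otimes_B C)$ for the complex $G_\bullet=\HL^r(F_\bullet)$ of $B$-flat modules with $B$-flat homology, and then appeal to a standard flatness-and-base-change principle (the paper cites \cite[Lemma~2.8]{GEN_FREENESS_LOC_COHOM} directly, whereas you unpack it through the hyper-Tor spectral sequence). Your boundedness worry is in fact harmless here since $G_\bullet$ is bounded below, hence K-flat, but your degree-wise workaround is also fine.
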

\begin{proof}
	Let $F_\bullet$ be a graded free $T$-resolution of $M$ by modules of finite rank, and set $G_\bullet := \HL^{r}(F_\bullet)$.
	By \autoref{lem_compute_loc_cohom}, $\HH_{i}(G_\bullet)$ is a free $B$-module for all $i \ge 0$.
	Then, it follows that $\HH_i(G_\bullet) \otimes_{B} C \xrightarrow{\cong} \HH_i(G_\bullet \otimes_{B} C)$ for all $i \ge 0$ (see, e.g., \cite[Lemma 2.8]{GEN_FREENESS_LOC_COHOM}).
	By using \autoref{lem_compute_loc_cohom} again we obtain 
	$$
	\HL^i(M)\otimes_{B} C \;\cong\; \HH_{r-i}(G_\bullet) \otimes_{B} C \;\xrightarrow{\cong}\; \HH_{r-i}(G_\bullet \otimes_{B} C) \;\cong\;  \HL^i(M \otimes_{B} C),
	$$
	and so the result follows.
\end{proof}

By using the property of exchange for local Ext's (see \cite[Theorem 1.9]{ALTMAN_KLEIMAN_COMPACT_PICARD}, \cite[Theorem A.5]{LONSTED_KLEIMAN}), we obtain the following lemma.

\begin{lemma}
	\label{lem_Ext_base_change}
	Let $M$ be a finitely generated graded $R$-module and suppose that $M$ is a free $B$-module.
	Let $F_\bullet : \: \cdots \rightarrow F_i \rightarrow \cdots \rightarrow  F_1 \rightarrow F_0$ be a graded free $T$-resolution of $M$ by modules of finite rank.
	Let 
	$$
	D_M^i \;:=\; \Coker\big(\Hom_T(F_{i-1}, T) \rightarrow \Hom_T(F_{i}, T)\big)
	$$
	for each $i \ge 0$.
	Then, the following statements hold:
	\begin{enumerate}[\rm (i)]
		\item $\Ext_T^i(M, T) = 0$ for all $i \ge r+1$.
		\item $D_M^i$ is a free $B$-module for all $i \ge r+1$.
		\item If $\Ext_T^i(M, T)$ is a free $B$-module for all $0 \le i \le r$, then $$
		\Ext_T^i(M, T) \otimes_B C \;\xrightarrow{\cong}\; \Ext_{T \otimes_B C}^i(M\otimes_B C, T\otimes_B C)
		$$
		for all $i \ge 0$ any $B$-algebra $C$.
	\end{enumerate}
\end{lemma}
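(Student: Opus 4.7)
The plan is to first establish $\operatorname{pd}_T(M) \leq r$, which yields both (i) and (ii). Since $M$ is $B$-free and hence $B$-flat, taking a minimal graded free $T$-resolution $F^{\min}_\bullet$ of $M$ I observe that all syzygies are $B$-flat, so $F^{\min}_\bullet \otimes_B k$ (with $k = B/\bb$) is a minimal graded free resolution of $M/\bb M$ over $T \otimes_B k \cong k[x_1,\ldots,x_r]$. Hilbert's syzygy theorem bounds its length by $r$, and by minimality $F^{\min}_\bullet$ itself inherits the same bound; part (i) follows immediately. For (ii), I would write the given resolution as $F_\bullet \cong F^{\min}_\bullet \oplus C_\bullet$, with $C_\bullet$ a contractible complex of finite-rank free $T$-modules (a standard decomposition over the ${}^*$local graded ring $T$). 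Applying $\Hom_T(-,T)$ preserves this decomposition and keeps the second summand split-exact; since $F^{\min}_i = 0$ for $i \geq r+1$, the module $D_M^i$ is computed purely from $\Hom_T(C_\bullet, T)$, where cokernels are direct summands of finite-rank free $T$-modules and hence $B$-free.

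For (iii), I would set $K^\bullet := \Hom_T(F_\bullet, T)$, a complex of $B$-free modules with $H^i(K^\bullet) = \Ext_T^i(M, T)$, and propagate $B$-flatness to all cocycles $Z^i := \ker(d^i)$ and coboundaries $B^i := \operatorname{im}(d^{i-1})$. The base case lives at $i > r$: cohomology vanishes by (i), so $Z^i = B^i$, and the short exact sequence $0 \to B^i \to K^i \to D_M^i \to 0$ combined with (ii) and $B$-freeness of $K^i$ forces $B^i$ to be $B$-flat. An induction downward on $i$ then settles $i \leq r$: assuming $B^{i+1}$ is $B$-flat, the sequences
\[
0 \to Z^i \to K^i \to B^{i+1} \to 0 \quad \text{and} \quad 0 \to B^i \to Z^i \to H^i(K^\bullet) \to 0,
\]
together with the $\Tor_*^B$ long exact sequences and the hypothesis that $\Ext_T^i(M, T)$ is $B$-free for $0 \leq i \leq r$, yield $B$-flatness of $Z^i$ and then of $B^i$. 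Once every cocycle and coboundary is $B$-flat, tensoring each short exact sequence with $C$ preserves exactness, producing $H^i(K^\bullet) \otimes_B C \cong H^i(K^\bullet \otimes_B C)$.

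To close (iii), I would identify $H^i(K^\bullet \otimes_B C)$ with $\Ext_{T \otimes_B C}^i(M \otimes_B C, T \otimes_B C)$: $B$-flatness of $M$, and hence of all syzygies of $F_\bullet$, makes $F_\bullet \otimes_B C$ a free $T \otimes_B C$-resolution of $M \otimes_B C$, while finite-rankness of each $F_j$ supplies the canonical isomorphism $\Hom_T(F_j, T) \otimes_B C \cong \Hom_{T \otimes_B C}(F_j \otimes_B C, T \otimes_B C)$. The main obstacle I anticipate is organizing the flatness propagation cleanly: the base case at $i > r$ genuinely requires (ii) and cannot be extracted purely from the cohomological hypothesis on $[0, r]$, so (ii) is indispensable for launching the downward induction. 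The reference in the lemma's statement to Altman--Kleiman's ``property of exchange for local Ext's'' suggests that this flatness-propagation step may alternatively be packaged as a single invocation of that result, replacing the explicit $\Tor$ bookkeeping sketched above.
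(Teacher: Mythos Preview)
Your plan is correct and takes a genuinely different route from the paper's proof. For (i), the paper does \emph{not} bound $\operatorname{pd}_T(M)$; instead it shows $\Ext_{T_1}^i(M/\bb M,T_1)=0$ for $i>r$ via Hilbert's syzygy theorem, then uses the exchange property for local Ext's (Altman--Kleiman) to conclude that $\Ext_T^i(M,T)\otimes_B B/\bb \to 0$ is bijective, finishing with Nakayama. Your argument via minimal resolutions proves the stronger fact $\operatorname{pd}_T(M)\le r$ directly and bypasses Altman--Kleiman entirely. For (ii), the paper writes down the four-term exact sequence
\[
0 \to \Ext_{T\otimes_B C}^i(M\otimes_B C,T\otimes_B C) \to D_M^i\otimes_B C \to \Hom_T(F_{i+1},T)\otimes_B C \to D_M^{i+1}\otimes_B C \to 0,
\]
compares $C=B$ with $C=B/\bb$ for $i\ge r+1$, and reads off $\Tor_1^B(D_M^{i+1},B/\bb)=0$ from the resulting $\Tor$ long exact sequence, invoking the local flatness criterion. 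Your splitting $F_\bullet\cong F^{\min}_\bullet\oplus C_\bullet$ is cleaner and structural: it simply identifies $D_M^i$ for $i\ge r+1$ as a direct summand of a finite free $T$-module. For (iii), the paper again applies Altman--Kleiman: the surjectivity of $\gamma^{r+1}$ together with $B$-freeness of the Ext's propagates surjectivity of $\gamma^i$ downward, and then the exchange theorem yields base change for arbitrary $C$ in one stroke. Your explicit downward flatness propagation through the cocycle/coboundary short exact sequences is the elementary substitute for that black box; it is precisely the content one unpacks when proving the exchange theorem, and your observation that (ii) is indispensable for the base case at $i>r$ is exactly right. In sum, the paper packages everything through the exchange property, while your argument is self-contained and slightly more informative (it actually exhibits a finite free $T$-resolution of $M$).
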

\begin{proof}
	(i)
	Since $M$ is a free $B$-module, $F_\bullet \otimes_B B/\bb$ is a free $T_1$-resolution of $M/\bb M$.
	By using Hilbert's syzygy theorem on the polynomial ring $T_1 \cong B/\bb[x_1,\ldots,x_r]$, we get that $\Ext_{T_1}^i(M/\bb M, T_1) = 0$ for all $i \ge r+1$.
	In particular, the natural map 
	$$
	\Ext_T^{i}(M, T) \otimes_B B/\bb \rightarrow \Ext_{T_1}^i(M/\bb M, T_1) = 0
	$$
	is surjective for all $i \ge r+1$, however, \cite[Theorem 1.9]{ALTMAN_KLEIMAN_COMPACT_PICARD} or \cite[Theorem A.5]{LONSTED_KLEIMAN}  imply that the map above is actually bijective.
	As $\Ext_T^i(M, T)$ is a finitely generated graded $T$-module, each graded component $\left[\Ext_T^i(M, T)\right]_\nu$ is a finitely generated $B$-module.
	Note that the condition $\Ext_T^i(M, T) \otimes_B B/\bb = 0$ implies that 
	$
	\left[\Ext_T^i(M, T)\right]_\nu \otimes_B B/\bb = 0$
	 for all $\nu \in \ZZ$.
	Then, for all $i \ge r+1$, Nakayama’s lemma yields that $\left[\Ext_T^i(M, T)\right]_\nu = 0$ for all $\nu \in \ZZ$, and so $\Ext_T^i(M, T) = 0$, as required.
	
	(ii) 
	Since $M$ is $B$-free, by considering the complex $\Hom_T(F_\bullet, T) \otimes_{B} C$  with $C$ any $B$-algebra,  we obtain the four-term exact sequence 
	\begin{equation}
		\label{eq_four_term_exact}
		0 \rightarrow \Ext_{T \otimes_B C}^i(M\otimes_B C, T\otimes_B C) \rightarrow D_M^i \otimes_B C \rightarrow \Hom_T(F_{i+1}, T) \otimes_{B} C \rightarrow  D_M^{i+1} \otimes_{B} C \rightarrow 0
	\end{equation}
	for all $i \ge 0$.
	For all $i \ge r+1$, due to the computations of part (i), when we substitute $C = B$ and $C = B/\bb$ in \autoref{eq_four_term_exact}, we get the short exact sequences 
	\begin{equation}
		\label{eq_exact_seq_D_M}
		0 \rightarrow D_M^i \rightarrow \Hom_T(F_{i+1}, T)  \rightarrow  D_M^{i+1} \rightarrow 0
	\end{equation}	
	and 
	\begin{equation}
			\label{eq_tensor_exact_seq_D_M}
			0 \rightarrow D_M^i \otimes_{B} B/\bb \rightarrow \Hom_T(F_{i+1}, T) \otimes_{B} B/\bb \rightarrow  D_M^{i+1} \otimes_{B} B/\bb \rightarrow 0,
	\end{equation}
	respectively.
	For all $i \ge r+1$, from \autoref{eq_tensor_exact_seq_D_M} and the long exact sequence in Tor's induced by tensoring \autoref{eq_exact_seq_D_M} with $B/\bb$, we obtain that $\Tor_1^B(D_M^{i+1}, B/\bb) = 0$, and so by the local flatness criterion (see \cite[Theorem 22.3]{MATSUMURA}) if follows that $D_M^{i+1}$ and $D_M^i$ are free $B$-modules.
	
	(iii)
	Let $\gamma^i$ be the natural map $\gamma^i : \Ext_T^i(M, T) \otimes_{B} B/\bb \rightarrow \Ext_{T_1}^i(M/\bb M, T_1)$.
	By \cite[Theorem 1.9]{ALTMAN_KLEIMAN_COMPACT_PICARD} or \cite[Theorem A.5]{LONSTED_KLEIMAN}, we have that under the assumption that $\gamma^i$ is surjective, then $\gamma^{i-1}$ is surjective if and only if $\Ext_T^i(M, T)$ is a free $B$-module.
	Since $\gamma^{r+1}$ is surjective and $\Ext_T^i(M, T)$ is a free $B$-module for all $0 \le i \le r$, by descending induction on $i$ we get that $\gamma^i$ is surjective for all $i \ge 0$.
	Finally, by using \cite[Theorem 1.9]{ALTMAN_KLEIMAN_COMPACT_PICARD} or \cite[Theorem A.5]{LONSTED_KLEIMAN} the result follows.
\end{proof}

The next proposition yields some version of graded local duality relative to the base ring $B$.

\begin{proposition}
	\label{prop_equiv_freeness}
	Let $M$ be a finitely generated graded $R$-module and suppose that $M$ is a free $B$-module.
	Then, the following two conditions are equivalent:
	\begin{enumerate}[\rm (1)]
		\item $\HL^i(M)$ is a free $B$-module for all $0 \le i \le r$.
		\item $\Ext_T^i(M, T)$ is a free $B$-module for all $0 \le i \le r$.
	\end{enumerate}
	Moreover, when any of the above equivalent conditions is satisfied, we have that 
	$$
	\HL^i(M) \cong \left(\Ext_T^{r-i}(M, T(-\delta))\right)^{*_B}
	$$ 
	for all integers $i$.
\end{proposition}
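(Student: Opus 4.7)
The plan is to exploit the $B$-duality between the complex computing local cohomology and the complex computing $\Ext$. I would first fix a graded free $T$-resolution $F_\bullet$ of $M$ by modules of finite rank and set
$$
G_\bullet \;:=\; \HL^{r}(F_\bullet) \qquad\text{and}\qquad E^\bullet \;:=\; \Hom_T(F_\bullet, T(-\delta)).
$$
Since $M$ is $B$-free, \autoref{lem_compute_loc_cohom} (taken with $C=B$) gives $\HL^i(M) \cong H_{r-i}(G_\bullet)$ for every $i$, while by definition $H^i(E^\bullet) \cong \Ext_T^{i}(M,T(-\delta))$. The isomorphism \autoref{eq_isom_complexes} supplies a canonical isomorphism of graded complexes $G_\bullet \cong \left(E^\bullet\right)^{*_B}$; in particular, for every $\nu\in\ZZ$, the complex $[G_\bullet]_\nu$ is the $B$-dual of $[E^\bullet]_{-\nu}$, and both consist of finitely generated free $B$-modules.

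The heart of the argument would then be the following standard algebraic fact, applied to each graded piece: if $K_\bullet$ is a bounded-below complex of finitely generated free modules over a Noetherian local ring $B$ whose homology vanishes in sufficiently high degree, then $H_i(K_\bullet)$ is $B$-free for every $i$ if and only if $H^i(\Hom_B(K_\bullet,B))$ is $B$-free for every $i$, and in this case there is a natural isomorphism $H^i(\Hom_B(K_\bullet,B)) \cong \Hom_B(H_i(K_\bullet),B)$. I would prove this by a degree-by-degree inductive splitting: once $H_i(K_\bullet)$ is known to be free, the cycle-boundary short exact sequences $0\to B_i\to Z_i\to H_i\to 0$ and $0\to Z_{i+1}\to K_{i+1}\to B_i\to 0$ split, exhibiting $K_\bullet$ as the direct sum of its homology (placed in the appropriate degrees with zero differentials) and a split-acyclic complex of free modules; this decomposition is preserved by $\Hom_B(-,B)$, which yields the claim.

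Applying this fact in each graded degree to $[G_\bullet]_\nu$ versus $[E^\bullet]_{-\nu}$, and recalling (\autoref{rem_basic}(v)) that $B$-freeness of $\HL^i(M)$ or $\Ext_T^i(M,T)$ is equivalent to $B$-freeness of every graded component, then yields the equivalence $(1)\Leftrightarrow(2)$. Assembling the degree-wise isomorphisms
$$
[\HL^i(M)]_\nu \;\cong\; \Hom_B\!\left([\Ext_T^{r-i}(M,T(-\delta))]_{-\nu},\,B\right)
$$
over $\nu\in\ZZ$ produces the claimed graded isomorphism $\HL^i(M) \cong \left(\Ext_T^{r-i}(M,T(-\delta))\right)^{*_B}$.

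The main obstacle I anticipate is the bookkeeping in the splitting lemma, since $F_\bullet$ may be infinite when $M$ has infinite $T$-projective dimension; to handle this I would either truncate the complex using the vanishings $\Ext_T^i(M,T)=0=\HL^i(M)$ for $i>r$ (by \autoref{rem_basic}(iv) and \autoref{lem_Ext_base_change}(i)) before invoking the lemma, or verify that the inductive splitting runs cleanly on the bounded-below complex since it starts at the lowest degree and is manifestly preserved under $B$-duality.
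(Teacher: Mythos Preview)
Your route—replacing the paper's double-complex spectral sequences with a direct splitting of the complex over the local base $B$—is a genuinely different and more elementary argument. The paper forms $K^{p,q}=\bigoplus_\nu\Hom_B([G_p]_{-\nu},I^q)$ with $I^\bullet$ an injective $B$-resolution of $B$ and reads off the isomorphism from the collapse of both associated spectral sequences; your splitting lemma accomplishes that collapse by hand. For the direction $(1)\Rightarrow(2)$ your argument is complete: the induction on the bounded-below chain complex $[G_\bullet]_\nu$ starts at degree $0$, each free $H_i$ forces $B_i$ and then $Z_{i+1}$ to be free, and the resulting decomposition $G_\bullet\cong(\text{homology})\oplus(\text{split-acyclic})$ dualizes to give $H^i(E^\bullet)\cong\big(H_i(G_\bullet)\big)^{*_B}$.

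There is, however, a real gap in $(2)\Rightarrow(1)$. Your lemma is stated as an ``if and only if'', but the proof sketch you give (splitting from the bottom) establishes only the forward direction. To deduce freeness of $H_i(G_\bullet)$ from freeness of $H^i(E^\bullet)$ you must run the analogous splitting on the \emph{cochain} complex $E^\bullet$, and there the induction has to start from the top, which does not exist since $E^\bullet$ is unbounded above. The vanishing $\Ext_T^i(M,T)=0$ for $i>r$ that you cite from \autoref{lem_Ext_base_change}(i) is not enough: exactness of $E^{r}\to E^{r+1}\to E^{r+2}\to\cdots$ does not, over a general local ring, force the boundary submodules $B^{i}$ to be free, so your induction has no anchor. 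What is actually needed is \autoref{lem_Ext_base_change}(ii): the cokernel $D_M^{r+1}$ is $B$-free. With this in hand you can truncate to the bounded complex $P^\bullet=\Hom_T(F_\bullet^{\le r+1},T(-\delta))$, whose top cohomology $H^{r+1}(P^\bullet)\cong D_M^{r+1}(-\delta)$ is now free along with $H^i(P^\bullet)\cong\Ext_T^i(M,T)(-\delta)$ for $0\le i\le r$, and your downward splitting induction then runs. This is exactly the truncation the paper performs before its second spectral sequence, and you need it for the same reason; once you invoke part (ii) rather than part (i), the two proofs become parallel, yours trading spectral sequences for explicit splittings.
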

\begin{proof}
	Let $F_\bullet: \cdots \rightarrow F_i \rightarrow \cdots \rightarrow  F_1 \rightarrow F_0$ be a graded free $T$-resolution of $M$ by modules of finite rank.
	Let $I^\bullet:  I^0 \rightarrow I^1 \rightarrow \cdots \rightarrow I^i \rightarrow \cdots$ be an injective $B$-resolution of $B$.
	
	 $(1) \Rightarrow (2)$.	
	Suppose that $\HL^i(M)$ is a free $B$-module for all $i \ge 0$ (see \autoref{rem_basic}(iv)).	
	To simplify the notation, let $G_\bullet := \HL^{r}(F_\bullet)$.
	By \autoref{lem_compute_loc_cohom}, $\HH_{i}(G_\bullet)$ is a free $B$-module for all $i \ge 0$.
	From \autoref{eq_isom_complexes}, we obtain the isomorphism of complexes $\left(G_\bullet\right)^{*_B} \cong \Hom_T(F_\bullet
	, T(-\delta))$.
	Thus, to finish the proof it suffices to show that $\HH^{i}\big(\left(G_\bullet\right)^{*_B}\big) \cong \big(\HH_i(G_\bullet)\big)^{*_B}$ because $\Ext_T^i(M, T(-\delta)) \cong \HH^i\big(\Hom_T(F_\bullet
	, T(-\delta))\big)$.
		
	Let $K^{\bullet, \bullet}$ be the first quadrant double complex given by 
	$
	K^{p,q} := \bigoplus_{\nu \in \ZZ} \Hom_B\left([G_p]_{-\nu}, I^q\right).
	$
	Note that the corresponding spectral sequences have respective second terms
	$$
	{}^\text{I}E_2^{p,q} = \bigoplus_{\nu \in \ZZ} \HH^{p}\big(\Ext^q_B ([G_\bullet]_{-\mu}, B)\big) \quad\text{ and }\quad
	{}^\text{II}E_2^{p,q} = \bigoplus_{\nu \in \ZZ} \Ext^p_B \big([\HH_q(G_\bullet)]_{-\nu},B\big)  .
	$$  
	Finally, since $G_i$ and $\HH_{i}(G_\bullet)$ are free $B$-modules for all $i \ge 0$, we obtain the isomorphisms 
	$$
	\Ext_T^i(M, T(-\delta)) \; \cong \; \HH^{i}\big(\left(G_\bullet\right)^{*_B}\big) \;\cong\; \big(\HH_i(G_\bullet)\big)^{*_B} \;\cong \; \big( \HL^{r-i}(M) \big)^{*_B}
	$$  
	for all $i \ge 0$.

	$(2) \Rightarrow (1)$.	
	Suppose that $\Ext_T^i(M, T)$ is a free $B$-module for all $0 \le i \le r$.	
	Let $F_\bullet^{\le r+1}$ be the complex given as the truncation
	$$
	F_\bullet^{\le r+1} \; : \; 0 \rightarrow F_{r+1} \rightarrow F_r \rightarrow \cdots \rightarrow F_1 \rightarrow F_0, 
	$$
	and $P^\bullet := \Hom_T(F_\bullet^{\le r+1}, T(-\delta))$.
	By hypothesis $\HH^i(P^\bullet) \cong \Ext_T^i(M, T)(-\delta)$ is a free $B$-module for all $0 \le i \le r$.
	From \autoref{lem_Ext_base_change}, we also have that $\HH^{r+1}(P^\bullet) \cong D_M^{r+1}(-\delta)$  is a free $B$-module.
	
	Let $L^{\bullet, \bullet}$ be the second quadrant double complex given by 
	$
	L^{-p,q} := \bigoplus_{\nu \in \ZZ} \Hom_B\left([P^p]_{-\nu}, I^q\right).
	$
	Now the corresponding spectral sequences have respective second terms
	$$
	{}^\text{I}E_2^{-p,q} = \bigoplus_{\nu \in \ZZ} \HH^{-p}\big(\Ext^q_B ([P^\bullet]_{-\mu}, B)\big) \quad\text{ and }\quad
	{}^\text{II}E_2^{p,-q} = \bigoplus_{\nu \in \ZZ} \Ext^p_B \big([\HH^q(P^\bullet)]_{-\nu},B\big)  .
	$$  
	Finally, since $P^i$ and $\HH^{i}(P^\bullet)$ are free $B$-modules for all $0 \le i \le r+1$, by using \autoref{lem_compute_loc_cohom} and \autoref{eq_isom_complexes} we  obtain the following isomorphisms 
	$$
	\HL^{r-i}(M)	\;\cong\;  \HH_i\big((P^\bullet)^{*_B}\big)  \;\cong\;  \big(\HH^i(P^\bullet)\big)^{*_B} \;\cong\; \big(\Ext_T^i(M, T(-\delta))\big)^{*_B}
	$$
	for all $0 \le i \le r$.
	This concludes the proof of the proposition.
\end{proof}

We now concentrate on giving a ``Theorem on Formal Functions'' for the local cohomology modules.
Our proof of this result is a consequence of the well-known Theorem on Formal Functions for sheaf cohomology (see \cite[Theorem III.11.1]{HARTSHORNE}, \cite[\href{https://stacks.math.columbia.edu/tag/02OC}{Tag 02OC}]{stacks-project}).
Let $M$ be a finitely generated graded $R$-module. 
Note that for every $q \ge 1$ and $\nu \in \ZZ$, we have a natural map ${[\HL^i(M)]}_\nu \otimes_{B} B_q \rightarrow {[\HL^i(M/\bb^qM)]}_\nu$.
Then, we obtain an induced natural map 
$$
	{[\HL^i(M)]}_\nu\,{\widehat{}} \;\,\rightarrow\;\, \underset{\leftarrow}{\lim}\, {[\HL^i(M/\bb^qM)]}_\nu,
$$
where ${[\HL^i(M)]}_\nu\,{\widehat{}}$ \, denotes the completion of the finitely generated $B$-module ${[\HL^i(M)]}_\nu$ with respect to the maximal ideal $\bb \subset B$.

\begin{proposition}
	\label{prop_formal_funct}
	Let $M$ be a finitely generated graded $R$-module.
	Then, the natural map 
	$$
	{[\HL^i(M)]}_\nu\,{\widehat{}} \;\; \xrightarrow{\cong} \;\; \underset{\leftarrow}{\lim}\, {[\HL^i(M/\bb^qM)]}_\nu,
	$$
	is an isomorphism for all $i \ge 0$ and $\nu \in \ZZ$.
\end{proposition}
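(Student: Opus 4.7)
The plan is to reduce the statement to the classical Theorem on Formal Functions for sheaf cohomology applied to the projective — and hence proper — morphism $f : X := \Proj(R) \to \Spec(B)$; the same theorem also applies to the base-changed morphisms $f_q : X_q := \Proj(R_q) \to \Spec(B_q)$, where $X_q \cong X \times_{\Spec(B)} \Spec(B_q)$. The key auxiliary tool is the Serre--Grothendieck comparison, which for any finitely generated graded $R$-module $N$ provides a functorial four-term exact sequence
$$
0 \to \HL^0(N) \to N \to \bigoplus_{\nu \in \ZZ} \HH^0\!\big(X, \widetilde{N}(\nu)\big) \to \HL^1(N) \to 0
$$
together with functorial isomorphisms $[\HL^i(N)]_\nu \cong \HH^{i-1}\!\big(X, \widetilde{N}(\nu)\big)$ for every $i \ge 2$ and $\nu \in \ZZ$, and analogously for $M/\bb^q M$ on $X_q$ (using $\widetilde{M/\bb^q M}(\nu) \cong \widetilde{M}(\nu) \otimes_{\OO_X} \OO_{X_q}$).

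The Theorem on Formal Functions applied to the coherent sheaf $\widetilde{M}(\nu)$ on $X$ yields
$$
\HH^j\!\big(X, \widetilde{M}(\nu)\big)\,{\widehat{}} \;\;\xrightarrow{\cong}\;\; \underset{\leftarrow}{\lim}\, \HH^j\!\big(X_q, \widetilde{M/\bb^q M}(\nu)\big)
$$
for every $j \ge 0$ and $\nu \in \ZZ$. Specializing to $j \ge 1$ and transporting along the isomorphism from the previous paragraph settles the case $i \ge 2$ of the statement immediately. For the remaining cases $i \in \{0,1\}$, the plan is to build the commutative diagram whose top row is the $\bb$-adic completion of the degree-$\nu$ strand of the four-term sequence for $M$ and whose bottom row is the inverse limit (over $q$) of the degree-$\nu$ strands of the four-term sequences for $M/\bb^q M$. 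All entries in the top row are finitely generated $B$-modules — by \autoref{rem_basic}(v) for the $\HL^\bullet$ terms and by Serre's finiteness for $\HH^0\!\big(X, \widetilde{M}(\nu)\big)$ — so $\bb$-adic completion keeps the top row exact. In the bottom row, every entry of each inverse system is a finitely generated $B_q$-module, hence Artinian as a $B$-module, so the Mittag-Leffler condition holds automatically and $\underset{\leftarrow}{\lim}$ preserves exactness of the row. The two middle vertical comparison maps are isomorphisms: for $[M]_\nu$ by the classical identification of $\bb$-adic completion of a finitely generated $B$-module with the inverse limit modulo powers of $\bb$, and for $\HH^0\!\big(X, \widetilde{M}(\nu)\big)$ by the $j = 0$ case of the previous display. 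The five-lemma then yields the sought isomorphisms for $\HL^0$ and $\HL^1$.

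The substantive geometric input is the Theorem on Formal Functions; the remaining work is a diagram chase together with standard finiteness/Mittag-Leffler arguments. The one point meriting some care is verifying that the natural maps from the statement (induced on $\HL^i$ by the surjections $M \twoheadrightarrow M/\bb^q M$) agree with the ones appearing on the sheaf-cohomology side (induced by restriction of coherent sheaves along $X_q \hookrightarrow X$), but this is automatic from the naturality of the Serre--Grothendieck comparison in $N$.
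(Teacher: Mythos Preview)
Your proposal is correct and follows essentially the same route as the paper: reduce to the Theorem on Formal Functions via the Serre--Grothendieck comparison, handling $i\ge 2$ directly and $i\in\{0,1\}$ by the four-term exact sequence together with a Mittag-Leffler/five-lemma argument. The only cosmetic differences are that the paper works over $X=\Proj(T)$ rather than $\Proj(R)$ and first adjoins an extra variable to ensure $r\ge 2$, but these do not affect the substance of the argument.
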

\begin{proof}
	If we set $T' = T[x_{r+1}]$ and $M' = M \otimes_{B} B[x_{r+1}]$, then for all $i \ge 0$ we get the natural isomorphism $\HH_{(\mm + x_{r+1}T')}^i\big(M'/x_{r+1}M'\big) \cong \HL^i(M)$.
	Thus, without any loss of generality, we assume that $r \ge 2$. 
	
	Let $X = \Proj(T)$ (here we may need $r \ge 2$) and $\FF = \widetilde{M}$ be the corresponding coherent sheaf.
	For each $q \ge 1$, let $M_q := M / \bb^q M$, $X_q  := X \times_{\Spec(B)} \Spec(B_q)$ and $\FF_q := \FF \otimes_B B_q$.
	For $i \ge 2$, \cite[Theorem III.11.1]{HARTSHORNE} or \cite[\href{https://stacks.math.columbia.edu/tag/02OC}{Tag 02OC}]{stacks-project} yield the following natural isomorphism 
	$$
	{[\HL^i(M)]}_\nu\;{\widehat{}} \;\, \cong\;\, \HH^{i-1}(X, \FF(\nu)) \;\, {\widehat{}} \;\, \xrightarrow{\cong\;} \;\, \underset{\leftarrow}{\lim}\, \HH^{i-1}(X_q, \FF_q(\nu)) \;\, \cong \;\, \underset{\leftarrow}{\lim}\, {[\HL^i(M_q)]}_\nu.
	$$
	(For the relations between sheaf and local cohomologies see, e.g., \cite[Theorem A4.1]{EISEN_COMM}, \cite[Corollary 1.5]{HYRY_MULTIGRAD}).
	
	For $i \le 1$, we have the following commutative diagram with natural maps
	\begin{equation*}		
		\begin{tikzpicture}[baseline=(current  bounding  box.center)]
			\matrix (m) [matrix of math nodes,row sep=3em,column sep=1.5em,minimum width=2em, text height=1.5ex, text depth=0.25ex]
			{
				0 &  \text{${[\HL^0(M)]}_\nu$} \;{\widehat{}} & \text{${[M]}_\nu\;{\widehat{}}$}  & \HH^0(X, \FF(\nu))\;{\widehat{}} & \text{${[\HL^1(M)]}_\nu\;{\widehat{}}$} & 0\\
				0 & \text{$\underset{\leftarrow}{\lim}\, {[\HL^0(M_q)]}_\nu$} & \text{$\underset{\leftarrow}{\lim}\, {[M_q]}_\nu$}  &  \underset{\leftarrow}{\lim}\, \HH^0(X_q, \FF_q(\nu))  & \underset{\leftarrow}{\lim}\, {[\HL^1(M_q)]}_\nu & 0.\\
			};
			\path[-stealth]
			(m-1-1) edge (m-1-2)
			(m-1-2) edge (m-1-3)
			(m-1-3) edge (m-1-4)
			(m-1-4) edge (m-1-5)
			(m-1-5) edge (m-1-6)
			(m-2-1) edge (m-2-2)
			(m-2-2) edge (m-2-3)
			(m-2-3) edge (m-2-4)
			(m-2-4) edge (m-2-5)
			(m-2-5) edge (m-2-6)
			(m-1-2) edge node [left]  {$\alpha_1$} (m-2-2)		
			(m-1-3) edge node [left]  {$\alpha_2$} (m-2-3)
			(m-1-4) edge node [left]  {$\alpha_3$} (m-2-4)
			(m-1-5) edge node [left]  {$\alpha_4$} (m-2-5)
			;
		\end{tikzpicture}	
	\end{equation*}
	The first row is exact due to the exactness of completion.
	Note that ${[\HL^i(M_q)]}_\nu$ and $\HH^i(X_q, \FF_q(\nu))$ are $B_q$-modules of finite length for all $i \ge 0, \nu \in \ZZ$. 
	Thus, by using \cite[Proposition II.9.1]{HARTSHORNE} and \cite[Example II.9.1.2]{HARTSHORNE} (i.e., the Mittag Leffler condition is satisfied), we obtain that the second row is also exact.
	The map $\alpha_2$ is an isomorphism by the definition of completion and the map $\alpha_3$ is an isomorphism by \cite[Theorem III.11.1]{HARTSHORNE} or \cite[\href{https://stacks.math.columbia.edu/tag/02OC}{Tag 02OC}]{stacks-project}.
	It then follows that $\alpha_1$ and $\alpha_4$ are also isomorphisms.
	
	Therefore, the statement of the proposition holds for all $i \ge 0$.
\end{proof}

Next, we have a basic lemma regarding the behavior of fiber-full modules (see \autoref{nota_fiber_full}).
The following result is an extension of \cite[Proposition 4.5]{KOLLAR_KOVACS} and \cite[Proposition 2.2]{CONCA_VARBARO} to the case of modules.

\begin{lemma}
	\label{lem_filtrations}
	Let $C$ be an Artinian local ring which is a quotient of $B$. 
	Let $M$ be a finitely generated graded $R$-module and suppose that $M$ is fiber-full over $B$.
	\begin{enumerate}[\rm (i)]
		\item 
		Let $N$ be a $C$-module and $\phi : N \twoheadrightarrow B/\bb$ be a surjective homomorphism.
		 Then, the natural map 
			$$
			\HL^i(M \otimes_B \phi):\; \HL^i(M \otimes_B N) \rightarrow \HL^i(M/\bb M)
			$$ 
			is surjective for all $i \ge 0$.
		\item 
		Let $C = I_0 \supset I_1 \supset \cdots \supset I_{\ell-1} \supset I_{\ell} =0$ be a filtration with $I_{j}/I_{j+1} \cong B/\bb$  for all $0 \le j \le \ell-1$.
		 Then, we have the following short exact sequences
			$$
			0 \rightarrow \HL^ i(M \otimes_B I_{j+1}) \rightarrow  \HL^ i(M \otimes_B I_j)  \rightarrow \HL^ i(M \otimes_B I_j/I_{j+1}) \rightarrow 0
			$$
			for all $i\ge 0$ and $0 \le j \le \ell-1$.
	\end{enumerate}
\end{lemma}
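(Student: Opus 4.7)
The plan is to reduce part (i) directly to the defining fiber-full condition by constructing a $B_q$-linear lift of $\phi$, and then deduce part (ii) from part (i) by showing that the connecting homomorphisms in the long exact sequence of local cohomology all vanish.

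For part (i), I would first observe that since $C$ is an Artinian local ring with maximal ideal $\bb C$, one has $\bb^q C = 0$ for some $q \ge 1$, so $C$ is a quotient of $B_q = B/\bb^q$ and $N$ inherits the structure of a $B_q$-module. Choose any preimage $n \in N$ of $1 \in B/\bb$ under the surjection $\phi$, and define the $B_q$-linear map $\psi : B_q \to N$ by $\psi(1) = n$. By construction the composite $\phi \circ \psi : B_q \to B/\bb$ sends $1 \mapsto 1$, and so it coincides with the natural projection $B_q \twoheadrightarrow B/\bb$. Since $M$ is $B$-flat (being fiber-full over $B$), applying $M \otimes_B -$ produces the factorization
$$M/\bb^q M \;\xrightarrow{\;M \otimes_B \psi\;}\; M \otimes_B N \;\xrightarrow{\;M \otimes_B \phi\;}\; M/\bb M$$
whose composite is the natural projection. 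Taking $\HL^i$ and invoking that $\HL^i(M/\bb^q M) \to \HL^i(M/\bb M)$ is surjective by the fiber-full hypothesis, the surjectivity of $\HL^i(M \otimes_B \phi)$ follows from the elementary fact that if $f \circ g$ is surjective then $f$ is.

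For part (ii), I would apply $M \otimes_B -$ to the short exact sequence $0 \to I_{j+1} \to I_j \to I_j/I_{j+1} \to 0$, which stays exact by $B$-flatness of $M$, and examine the associated long exact sequence in local cohomology. The claimed short exact sequences are equivalent to the vanishing, for every $i$ and every $j$, of the connecting homomorphism $\HL^i(M \otimes_B I_j/I_{j+1}) \to \HL^{i+1}(M \otimes_B I_{j+1})$; equivalently, to the surjectivity of $\HL^i(M \otimes_B I_j) \to \HL^i(M \otimes_B I_j/I_{j+1}) \cong \HL^i(M/\bb M)$. Since $I_j$ is a $C$-module and the quotient map $I_j \twoheadrightarrow I_j/I_{j+1} \cong B/\bb$ is a surjection onto $B/\bb$, this surjectivity is precisely part (i) applied to $N = I_j$.

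There is no serious obstacle. The one conceptual point to notice is that the fiber-full hypothesis, which is stated only for the canonical projections $M/\bb^q M \twoheadrightarrow M/\bb M$, actually governs \emph{arbitrary} surjections $N \twoheadrightarrow B/\bb$ from $C$-modules: the lift $\psi$ is the device that effects this reduction. Once (i) is established, (ii) is a short diagram chase.
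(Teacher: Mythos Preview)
Your proof is correct and follows essentially the same approach as the paper: in part (i) the paper likewise picks a preimage $w \in N$ of $1 \in B/\bb$, defines the lift $\eta : B_q \to N$ by $1 \mapsto w$, and factors the canonical surjection $M/\bb^q M \to M/\bb M$ through $M \otimes_B N$; part (ii) is then deduced exactly as you describe, by applying part (i) to $N = I_j$ to force the surjectivity of $\beta_i : \HL^i(M \otimes_B I_j) \to \HL^i(M \otimes_B I_j/I_{j+1})$ in the long exact sequence.
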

\begin{proof}
	(i)
	Let $w \in N$ such that $\phi(w) = 1 \in B/\bb$.
	Take $q \ge 1$ such that  we have a surjection $B_q \twoheadrightarrow C$ and so $N$ becomes a $B_q$-module.
	Let $\eta : B_q \rightarrow N$ be the map defined by $1 \in B_q \mapsto w \in N$.
	Therefore, the natural map $\HL^i(M / \bb^qM) \rightarrow \HL^i(M/\bb M)$ coincides with the composition of maps  $\HL^i(M \otimes_B \phi) \circ \HL^i(M \otimes_B \eta)$, and so the result follows from the definition of fiber-full.

	(ii)  
	Fix $0 \le j \le \ell-1$.
	Since $M$ is a free $B$-module, we have the short exact sequence 
	$$
	0 \rightarrow M \otimes_B I_{j+1} \rightarrow  M \otimes_B I_j \rightarrow M \otimes_B I_j/I_{j+1} \rightarrow 0,
	$$ 
	and so we get the long exact sequence in local cohomology 
	$$
	\HL^ {i-1}(M \otimes_B I_j) \xrightarrow{\beta_{i-1}} \HL^ {i-1}(M \otimes_B I_j/I_{j+1}) \rightarrow \HL^ i(M \otimes_B I_{j+1}) \rightarrow  \HL^ i(M \otimes_B I_j)  \xrightarrow{\beta_i} \HL^ i(M \otimes_B I_j/I_{j+1}). 
	$$
	By part (i) the map $\beta_i$ is surjective for all $i \ge 0$, and this implies the result.
\end{proof}

The following proposition shows that fiber-fullness implies the freeness of certain local cohomology  modules. 
This result is inspired by the techniques  used in \cite[\S 4]{KOLLAR_KOVACS}.

\begin{proposition}
	\label{prop_free_impl_fiber_full}
	Let $M$ be a finitely generated graded $R$-module and suppose that $M$ is fiber-full over $B$.
	Then $\HL^i(M/\bb^qM)$ is a free $B_q$-module for all $i \ge 0$ and $q \ge 1$.
\end{proposition}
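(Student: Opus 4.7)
The plan is to prove $B_q$-freeness graded piece by graded piece, combining a length computation produced by \autoref{lem_filtrations}(ii) with the standard Artinian-local freeness criterion. This avoids needing to induct on $q$ or to appeal again to the heavier duality machinery of \autoref{prop_equiv_freeness}.

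First, pick a composition series $B_q = I_0 \supset I_1 \supset \cdots \supset I_\ell = 0$ of the Artinian local ring $B_q$, where necessarily $\ell = \text{length}_{B_q}(B_q)$ and $I_j/I_{j+1} \cong B/\bb$ for every $j$. Applying \autoref{lem_filtrations}(ii) with $C = B_q$ and restricting to degree $\nu$ yields, for all $i \ge 0$ and all $0 \le j \le \ell-1$, a short exact sequence of $B$-modules of finite length (\autoref{rem_basic}(v)):
\begin{equation*}
0 \;\to\; [\HL^i(M \otimes_B I_{j+1})]_\nu \;\to\; [\HL^i(M \otimes_B I_j)]_\nu \;\to\; [\HL^i(M/\bb M)]_\nu \;\to\; 0.
\end{equation*}
A descending induction on $j$ starting from $I_\ell = 0$ then gives the length identity
\begin{equation*}
\text{length}_{B_q}\bigl([\HL^i(M/\bb^q M)]_\nu\bigr) \;=\; \text{length}(B_q) \cdot \dim_{B/\bb}\bigl([\HL^i(M/\bb M)]_\nu\bigr).
\end{equation*}

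Next, invoke the standard fact that, for any finitely generated module $N$ over the Artinian local ring $B_q$, one has $\text{length}_{B_q}(N) \le \text{length}(B_q) \cdot \dim_{B/\bb}(N/\bb N)$, with equality if and only if $N$ is $B_q$-free (compare lengths in the minimal surjection $B_q^{\mu(N)} \twoheadrightarrow N$ and use Nakayama). The fiber-full hypothesis supplies a surjection $\HL^i(M/\bb^q M) \twoheadrightarrow \HL^i(M/\bb M)$ whose target is annihilated by $\bb$, so it factors through $\HL^i(M/\bb^q M) \otimes_{B_q} B/\bb$; restricting to degree $\nu$ gives
$\dim_{B/\bb}\bigl([\HL^i(M/\bb^q M)]_\nu \otimes_{B_q} B/\bb\bigr) \ge \dim_{B/\bb}\bigl([\HL^i(M/\bb M)]_\nu\bigr)$.
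Plugged into the freeness bound, the length identity above forces equality, so each graded piece $[\HL^i(M/\bb^q M)]_\nu$ is $B_q$-free. Since a direct sum of free modules is free, $\HL^i(M/\bb^q M) = \bigoplus_\nu [\HL^i(M/\bb^q M)]_\nu$ is then a free $B_q$-module for every $i \ge 0$.

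The only real subtlety is pinching the Artinian freeness bound into an equality: one needs a matching upper bound on $\dim_{B/\bb}(N/\bb N)$ coming from the length identity, and a matching lower bound coming from the fiber-full surjection. Everything else reduces to \autoref{lem_filtrations} and the existence of a composition series for $B_q$.
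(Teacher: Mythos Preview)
Your length computation via \autoref{lem_filtrations}(ii) is correct: writing $N = [\HL^i(M/\bb^q M)]_\nu$, $\ell = \text{length}(B_q)$, and $d = \dim_{B/\bb}([\HL^i(M/\bb M)]_\nu)$, you do obtain $\text{length}(N) = \ell d$. The fiber-full surjection also correctly gives $\mu(N) := \dim_{B/\bb}(N/\bb N) \ge d$. But these two facts, together with the freeness bound $\text{length}(N) \le \ell \cdot \mu(N)$, do \emph{not} force equality: substituting only recovers $\ell d \le \ell\,\mu(N)$, i.e.\ $d \le \mu(N)$, which is the same inequality you already had from the surjection. What you actually need is the reverse inequality $\mu(N) \le d$, and nothing in your argument supplies it. Concretely, the three numerical constraints you assemble are simultaneously satisfied by non-free modules: over $B_q = k[\epsilon]/(\epsilon^2)$ (so $\ell = 2$), the module $N = k \oplus k$ has $\text{length}(N) = 2 = \ell \cdot 1$ and $\mu(N) = 2 \ge 1$, yet $N$ is not free. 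So the ``matching upper bound on $\dim_{B/\bb}(N/\bb N)$ coming from the length identity'' that you announce in your final paragraph simply does not exist.

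The missing inequality $\mu(N) \le d$ amounts to showing that the kernel of your surjection, namely the image of $\HL^i(M \otimes_B \bb B_q) \hookrightarrow \HL^i(M/\bb^q M)$ coming from \autoref{lem_filtrations}(ii) with $j=0$, is \emph{exactly} $\bb \cdot \HL^i(M/\bb^q M)$ rather than merely containing it. This identification is the genuine content of the proposition, and it is what the paper establishes by an induction on $\text{length}(C)$: one first proves $t\,\HL^i(\overline{M}) = \HL^i(t\overline{M})$ for a single socle element $t \in C$, then uses the inductive hypothesis together with the base-change isomorphism of \autoref{lem_base_ch_loc} (applied to the already-free $\HL^i(\overline{M}/t\overline{M})$) to propagate this to all of $\bb$, and finally invokes the local flatness criterion. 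Your length-counting shortcut, while elegant in spirit, does not bypass this step.
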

\begin{proof}
	Let $C$ be an Artinian local ring which is a quotient of $B$.
	Let $C = I_0 \supset I_1 \supset \cdots \supset I_{\ell-1} \supset I_{\ell} =0$ be a filtration with $I_{j}/I_{j+1} \cong B/\bb$ for all $0 \le j \le \ell-1$.
	For ease of notation, let $\overline{M} := M \otimes_B C$.
	As $M$ is a free $B$-module, we have $M \otimes_B I_{j}  \cong I_j\,\overline{M}$.	
	Note that $I_1 = \bb C$, $I_{\ell-1} \cong B/\bb$ and $I_{\ell-1} = (t) \subset C$ is a principal ideal generated by some element $0 \neq t \in C$.	
	
	 We shall prove that $\HL^i(M \otimes_B C)$ is a free $C$-module for all $i \ge 0$.
	We proceed by induction on the length $\ell = \text{length}(C)$ of $C$.
	If $\ell = 1$, then $C = B/\bb$ and the result is clear.
	Thus, we assume that $\ell \ge 2$ and that $\HL^i(M \otimes_B C')$ is a free $C'$-module for any quotient $C'$ of $B$ with $\text{length}(C') < \ell$.

	From \autoref{lem_filtrations}(ii), by successively composing the injections $\HL^ i(M \otimes_B I_{j+1}) \hookrightarrow  \HL^ i(M \otimes_B I_j)$, we obtain the injection $\HL^ i(M \otimes_B I_{j}) \hookrightarrow  \HL^ i(M \otimes_B C)$.	
	Then, $0 \rightarrow I_j\,\overline{M} \rightarrow \overline{M} \rightarrow \overline{M}/I_j\,\overline{M} \rightarrow 0$  yields a long exact sequence in local cohomology that splits into the following short exact sequences
	\begin{equation}
		\label{eq_ex_seq_HL_tM}
		0 \rightarrow \HL^i(I_j\,\overline{M}) \rightarrow \HL^i(\overline{M}) \rightarrow \HL^i(\overline{M}/I_j\,\overline{M}) \rightarrow 0,
	\end{equation}
	because $\HL^i(I_j\,\overline{M}) \cong \HL^ i(M \otimes_B I_{j}) \hookrightarrow  \HL^ i(M \otimes_B C) \cong \HL^i(\overline{M})$ is injective for all $i \ge 0$.
	Hence, multiplication by $t$ induces the following commutative diagram 
	\begin{equation*}		
		\begin{tikzpicture}[baseline=(current  bounding  box.center)]
			\matrix (m) [matrix of math nodes,row sep=2.3em,column sep=1.5em,minimum width=2em, text height=1.5ex, text depth=0.25ex]
			{
				\HL^i(\overline{M}) &  & \HL^i(\overline{M}) \\
				 & \HL^i(t\,\overline{M}) &\\
			};
			\path[-stealth]
			(m-1-1) edge node [above]  {$\cdot t$} (m-1-3)
			;
			\path [draw,->>] (m-1-1) -- (m-2-2);
			\draw[right hook->] (m-2-2)--(m-1-3);		
		\end{tikzpicture}	
	\end{equation*}
	(the surjectivity of $\HL^i(\overline{M}) \twoheadrightarrow \HL^i(t\, \overline{M})$ follows from \autoref{lem_filtrations}(i), and the injectivity of $\HL^i(t\overline{M}) \hookrightarrow \HL^i(\overline{M})$ is given by \autoref{eq_ex_seq_HL_tM} with $j = \ell -1$). 
	So we obtain that $t \, \HL^i(\overline{M})$ and $\HL^i(t\, \overline{M})$ coincide as submodules of $\HL^i(\overline{M})$.
	This latter fact together with \autoref{eq_ex_seq_HL_tM} yields the following isomorphism
	\begin{equation*}
			\HL^i(\overline{M}/t\,\overline{M}) \; \cong \; \HL^i(\overline{M}) \big/ t\, \HL^i(\overline{M}).
	\end{equation*}
	Since $\overline{M}/t\,\overline{M} \cong M \otimes_{B} C/(t)$, the induction hypothesis implies that every $\HL^i(\overline{M}/t\,\overline{M})$ is a free $C/(t)$-module, and so \autoref{lem_base_ch_loc} gives the following isomorphisms 
	\begin{align}
		\label{eq_distribute_filtration}
		\begin{split}
			\HL^i(\overline{M}/I_j \overline{M}) & \; \cong \; \HL^i\big(\overline{M}/t\,\overline{M} \otimes_{C/(t)} C/I_{j}\big) \\
			& \; \cong \; \HL^i(\overline{M}/t\,\overline{M}) \,\otimes_{C/(t)}\, C/I_{j} \\
			& \; \cong \; \HL^i(\overline{M}) \big/ t\, \HL^i(\overline{M}) \,\otimes_{C/(t)}\, C/I_{j} \\
			& \; \cong \; \HL^i(\overline{M}) \big/ I_j\, \HL^i(\overline{M})
			\end{split}
	\end{align}
	for all $0 \le j \le \ell-1$.
	Consequently, the equations \autoref{eq_ex_seq_HL_tM} and \autoref{eq_distribute_filtration} give the equality $\HL^i(I_j\,\overline{M}) = I_j\,\HL^i(\overline{M})$ as submodules of $\HL^i(\overline{M})$.
	As for \autoref{eq_ex_seq_HL_tM}, now the short exact sequence $0 \rightarrow I_1\overline{M} \rightarrow \overline{M} \xrightarrow{\cdot t} t\overline{M} \rightarrow 0$  gives the following short exact sequence 
	$$
	0 \rightarrow I_1\, \HL^i(\overline{M}) \rightarrow \HL^i(\overline{M}) \xrightarrow{\cdot t} t\,\HL^i(\overline{M}) \rightarrow 0,
	$$
	and so from the isomorphism $C/I_1 \cong (t)$ we obtain a natural isomorphism 
	\begin{equation}
		\label{eq_isom_tensor_t}
		\HL^i(\overline{M}) \otimes_C (t)   \xrightarrow{\cong\,} t\,\HL^i(\overline{M}).
	\end{equation}
	Finally, the fact that $\HL^i(\overline{M})/t\, \HL^i(\overline{M})$ is a free $C/(t)$-module, the isomorphism of \autoref{eq_isom_tensor_t}, and the local flatness criterion (see \cite[Theorem 22.3]{MATSUMURA}) imply that $\HL^i(\overline{M})$ is a free $C$-module. 
	So, the proof of the proposition is complete.
\end{proof}

Finally, we are now ready for the main result of this paper.

\begin{theorem}
	\label{thm_freeness_criterion}
	Assume \autoref{setup_local_criterion}. 
	Let $M$ be a finitely generated graded $R$-module and suppose that $M$ is a free $B$-module.
	Then, the following six conditions are equivalent:
	\begin{enumerate}[\rm (1)]
		\item $\HL^i(M)$ is a free $B$-module for all $0 \le i \le r$.
		\item $\Ext_T^i(M, T)$ is a free $B$-module for all $0 \le i \le r$.
		\item $\HL^i(M/\bb^q M)$ is a free $B_q$-module for all $0 \le i \le r$ and $q \ge 1$.
		\item $\Ext_{T_q}^i(M/\bb^q M,  T_q)$ is a free $B_q$-module for all $0 \le i \le r$ and $q \ge 1$.
		\item The natural map 
		$
		\HL^i\big(M/\bb^qM\big) \rightarrow \HL^i\big(M/\bb M\big)
		$ 
		is surjective for all $0 \le i \le r$ and  $q \ge 1$.
		\item The natural map 
		$
		\Ext_{T_q}^{i}(M/\bb M, \omega_{T_q})
		\rightarrow 
		\Ext_{T_q}^{i}(M/\bb^qM, \omega_{T_q})
		$ 
		is injective for all $0 \le i \le r$ and $q \ge 1$.
	\end{enumerate}
	Moreover, when any of the above equivalent conditions is satisfied, we have the following isomorphisms 
	\begin{enumerate}[\rm (i)]
		\item $\HL^i(M) \; \cong \; \big(\Ext_T^{r-i}(M, T(-\delta))\big)^{*_B}$,
		\item $\HL^i(M) \otimes_B C \; \xrightarrow{\cong} \; \HL^i(M \otimes_B 
		C)$, and
		\item $\Ext_T^i(M, T) \otimes_B C \;\xrightarrow{\cong}\; \Ext_{T \otimes_B C}^i(M\otimes_B C, T\otimes_B C)$
	\end{enumerate}	
	for all integers $i$ and any $B$-algebra $C$.
\end{theorem}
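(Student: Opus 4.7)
The six conditions split into three pairs that are immediate from earlier results: (1) $\Leftrightarrow$ (2) together with isomorphism (i) is \autoref{prop_equiv_freeness}; applying the same proposition to the module $M/\bb^q M$ viewed over the Artinian base $B_q$ (with polynomial ring $T_q$) yields (3) $\Leftrightarrow$ (4); and (5) $\Leftrightarrow$ (6) is \autoref{lem_fiber_full_def_Ext}. The plan is therefore to close a cycle (1) $\Rightarrow$ (5) $\Rightarrow$ (3) $\Rightarrow$ (1). The first implication is straightforward: assuming (1), \autoref{lem_base_ch_loc} gives the base change $\HL^i(M/\bb^q M) \cong \HL^i(M) \otimes_B B_q$ compatibly in $q$, under which the map in (5) becomes the identity tensored with the surjection $B_q \twoheadrightarrow B/\bb$, hence surjective. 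The second implication, (5) $\Rightarrow$ (3), is exactly \autoref{prop_free_impl_fiber_full}.

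The crux is (3) $\Rightarrow$ (1), and the plan is to combine the Theorem on Formal Functions (\autoref{prop_formal_funct}) with a base-change argument for the complex $G_\bullet := \HL^r(F_\bullet)$, where $F_\bullet$ is a graded free $T$-resolution of $M$ by modules of finite rank. Each $G_j$ is a $B$-flat graded module (a sum of shifts of $\HL^r(T)$), and \autoref{lem_compute_loc_cohom} gives $\HL^i(M/\bb^q M) = \HH_{r-i}(G_\bullet \otimes_B B_q)$. Under (3) every homology module of $G_\bullet \otimes_B B_q$ is $B_q$-flat (either in the range $0 \le r-i \le r$ and $B_q$-free by assumption, or else zero), so the standard induction on cycles and boundaries for a bounded-below complex of flat modules with flat homology upgrades all cycle and boundary modules to $B_q$-flat. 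This yields the canonical isomorphism
$$
\HL^i(M/\bb^q M) \otimes_{B_q} B_{q'} \;\xrightarrow{\cong}\; \HL^i(M/\bb^{q'} M)
$$
for every $q' \le q$. Consequently, in each graded degree $\nu$ the $B_q$-free module $[\HL^i(M/\bb^q M)]_\nu$ has some rank $s$ independent of $q$, and the natural transition maps are surjective with kernel $\bb^{q'}[\HL^i(M/\bb^q M)]_\nu$. Lifting a basis of $[\HL^i(M/\bb M)]_\nu$ over $B/\bb$ inductively to compatible $B_q$-bases gives
$$
\underset{\leftarrow}{\lim}\, [\HL^i(M/\bb^q M)]_\nu \;\cong\; \widehat{B}^{\,s}.
$$
By \autoref{prop_formal_funct} this inverse limit equals the $\bb$-adic completion of $[\HL^i(M)]_\nu$. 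Since $[\HL^i(M)]_\nu$ is a finitely generated $B$-module by \autoref{rem_basic} and its completion is $\widehat{B}$-free of rank $s$, the faithful flatness of $B \to \widehat{B}$ forces $[\HL^i(M)]_\nu \cong B^{\,s}$; summing over $\nu$ yields (1).

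The supplementary isomorphisms (ii) and (iii) then follow from \autoref{lem_base_ch_loc} and \autoref{lem_Ext_base_change} respectively, applied under conditions (1) and (2) now established. The delicate point I anticipate is the flatness/base-change bookkeeping for the complex $G_\bullet$ in the final implication: one must verify that the induction on cycles and boundaries is carried out in the correct direction for a complex which is bounded below but may well be unbounded above, and that the compatible basis lifts genuinely assemble into an isomorphism with $\widehat{B}^{\,s}$ rather than merely a surjection onto it.
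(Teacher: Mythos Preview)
Your proof is correct and follows essentially the same route as the paper: the same three pairwise equivalences, the same cycle through (1), (3), (5), and the same use of \autoref{prop_formal_funct} for the key implication (3) $\Rightarrow$ (1). The only differences are cosmetic: for (3) $\Rightarrow$ (1) the paper quotes \autoref{lem_base_ch_loc} directly for the base change $\HL^i(M/\bb^{q+1}M) \otimes_{B_{q+1}} B_q \cong \HL^i(M/\bb^qM)$ rather than redoing the cycles/boundaries induction on $G_\bullet$, and then cites \cite[\href{https://stacks.math.columbia.edu/tag/0912}{Tag 0912}]{stacks-project} to conclude flatness of the inverse limit in place of your explicit basis-lifting to $\widehat{B}^{\,s}$.
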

\begin{proof}
	The equivalences $(1) \Leftrightarrow (2)$ and $(3) \Leftrightarrow (4)$ are obtained from \autoref{prop_equiv_freeness}.
	The equivalence $(5) \Leftrightarrow (6)$ is a consequence of \autoref{lem_fiber_full_def_Ext}.
	The implication $(1) \Rightarrow (3)$  follows from \autoref{lem_base_ch_loc}.
	The implication $(3) \Rightarrow (5)$ is obtained from \autoref{lem_base_ch_loc}, and \autoref{prop_free_impl_fiber_full} yields the implication $(5) \Rightarrow (3)$. 
	The additional statements regarding the isomorphisms (i), (ii) and (iii) would follow from \autoref{prop_equiv_freeness},  \autoref{lem_base_ch_loc} and \autoref{lem_Ext_base_change}, respectively.
	
	Therefore, to conclude the proof of the theorem it suffices to show the implication $(3) \Rightarrow (1)$. 
	Suppose that $\HL^i(M / \bb^q M)$ is a free $B_q$-module for all $i \ge 0$ and  $q \ge 1$.
	By \autoref{lem_base_ch_loc} we have the natural isomorphism
	$$
	\HL^i(M / \bb^{q+1} M) \otimes_{B_{q+1}} B_q \; \xrightarrow{\cong} \; \HL^i(M / \bb^q M).
 	$$
 	This implies that the natural map $\HL^i(M / \bb^{q+1} M)  \rightarrow \HL^i(M / \bb^q M)$ is surjective.
 	Then, the conditions of \cite[\href{https://stacks.math.columbia.edu/tag/0912}{Tag 0912}]{stacks-project} are satisfied for the inverse system ${\left({[\HL^i(M / \bb^q M)]}_\nu\right)}_{q \ge 1}$, and we get that the inverse limit $\underset{\leftarrow}{\lim}\, {[\HL^i(M / \bb^q M)]}_\nu$ is a flat $B$-module.
 	As a consequence of \autoref{prop_formal_funct}, we have that
 	$
 	{[\HL^i(M)]}_\nu\,{\widehat{}}  \;\xrightarrow{\cong} \; \underset{\leftarrow}{\lim}\, {[\HL^i(M/\bb^qM)]}_\nu
 	$
 	is a flat $B$-module.
 	Finally, this implies that ${[\HL^i(M)]}_\nu$ is a free $B$-module for each $\nu \in \ZZ$, and so the proof of the theorem is complete.
\end{proof}


\section{Fiber-full locus of a module}
\label{sect_fib_full}

In this section, we provide some  applications that follow from \autoref{thm_freeness_criterion}.

Let $A$ be a Noetherian ring, $R$ be a finitely generated positively graded $A$-algebra, and  $\mm = [R]_+$ be the graded irrelevant ideal of $R$.  	
For any $\pp \in \Spec(A)$, let $k(\pp) := A_\pp/\pp A_\pp$ be the residue field of $\pp$.

\begin{definition}
	Let $M$ be a finitely generated graded $R$-module.
	The \emph{fiber-full locus} of $M$ is defined as 
	$$
	\mathfrak{Fib}(M) \; := \; \big\{ \pp \in \Spec(A) \mid M \otimes_{A} A_\pp \text{ is fiber-full over } A_\pp \big\}
	$$
	(see \autoref{nota_fiber_full}).
\end{definition}

The following theorem gives some properties of the fiber-full locus of a module. 

\begin{theorem}
	\label{thm_locus}
	Let $M$ be a finitely generated graded $R$-module.
	Then, the following statements hold:
	\begin{enumerate}[\rm (i)]
		\item $\mathfrak{Fib}(M)$ is an open subset of $\Spec(A)$.
		\item If $A$ is generically reduced, then there is an element $a \in A$ avoiding the minimal primes of $A$ such that $\HL^i(M) \otimes_{A} A_a$ is a projective $A_a$-module for all $i \ge 0$, and so $\mathfrak{Fib}(M)$ is a dense subset of $\Spec(A)$.
		\item For all $i \ge 0$ and $\nu \in \ZZ$ the function
		$$
		\Spec(A) \rightarrow \NN, \quad \pp \;\mapsto\; \dim_{k(\pp)}\left( \left[\HL^i(M \otimes_{A} k(\pp))\right]_\nu \right)
		$$		
		is locally constant on $\mathfrak{Fib}(M)$.
	\end{enumerate}
\end{theorem}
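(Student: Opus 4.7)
The plan is to deduce all three parts from the main criterion, \autoref{thm_freeness_criterion}, combined with standard openness and generic-flatness results for finitely generated graded $T$-modules (which are available in \cite{GEN_FREENESS_LOC_COHOM}).

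For part (i), the equivalence $(1) \Leftrightarrow (2)$ of \autoref{thm_freeness_criterion} tells us that, upon passing to stalks, a prime $\pp$ lies in $\mathfrak{Fib}(M)$ if and only if both $M \otimes_A A_\pp$ and $\Ext_T^i(M,T) \otimes_A A_\pp$ are free $A_\pp$-modules for every $0 \le i \le r$. Each of the modules $M$ and $\Ext_T^i(M,T)$ is a finitely generated graded $T$-module, and the $A$-flatness locus of such a module is an open subset of $\Spec(A)$ by generic flatness combined with local constancy of Hilbert functions on the flat locus; in this graded setting the statement is available in \cite{GEN_FREENESS_LOC_COHOM}. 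Only finitely many such loci appear (one for $M$, one for each of the $r+1$ values of $i$), and their intersection $\mathfrak{Fib}(M)$ is therefore open.

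For part (ii), we invoke the generic freeness theorem for local cohomology of \cite{GEN_FREENESS_LOC_COHOM}: when $A$ is generically reduced, there is an element $a \in A$ avoiding every minimal prime such that $M \otimes_A A_a$ and all the $\HL^i(M) \otimes_A A_a$ are projective $A_a$-modules; only finitely many conditions are in play because $\HL^i(M) = 0$ for $i > r$. For every $\pp \in D(a)$, the modules $M \otimes_A A_\pp$ and $\HL^i(M) \otimes_A A_\pp$ are then $A_\pp$-free, so \autoref{thm_freeness_criterion} places $D(a)$ inside $\mathfrak{Fib}(M)$. Since $a$ avoids all minimal primes, $D(a)$ contains every generic point of $\Spec(A)$ and is dense, proving the density of $\mathfrak{Fib}(M)$.

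For part (iii), fix $\pp \in \mathfrak{Fib}(M)$. Applying \autoref{thm_freeness_criterion}(ii) to the fiber-full module $M \otimes_A A_\pp$ over the local ring $A_\pp$, with base change along $A_\pp \to k(\pp)$, yields a natural isomorphism
\[
\HL^i(M) \otimes_A k(\pp) \;\cong\; \HL^i\bigl(M \otimes_A k(\pp)\bigr),
\]
so that $\dim_{k(\pp)}\bigl[\HL^i(M \otimes_A k(\pp))\bigr]_\nu$ equals the $k(\pp)$-dimension of $[\HL^i(M)]_\nu \otimes_A k(\pp)$. Each graded component $[\HL^i(M)]_\nu$ is a finitely generated $A$-module (see \autoref{rem_basic}) whose localization at every point of $\mathfrak{Fib}(M)$ is free; a finitely generated module that is free at a stalk is free on an open neighborhood of that stalk, so this rank is locally constant on $\mathfrak{Fib}(M)$. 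The main obstacle in the plan is part (ii): producing one single element $a$ that simultaneously makes all the (typically not finitely generated) modules $\HL^i(M)$ projective over $A_a$. This cannot be read off from the generic flatness of the individual graded pieces $[\HL^i(M)]_\nu$ and genuinely requires the generic freeness machinery of \cite{GEN_FREENESS_LOC_COHOM}; once that ingredient and the openness of the flat locus for finitely generated graded $T$-modules are available, parts (i) and (iii) follow cleanly from \autoref{thm_freeness_criterion}.
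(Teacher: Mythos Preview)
Your arguments for parts (i) and (iii) match the paper's almost exactly: both express $\mathfrak{Fib}(M)$ as the finite intersection $U_M \cap \bigcap_{i=0}^{r} U_{\Ext_T^i(M,T)}$ of open flat loci via \autoref{thm_freeness_criterion}, and both deduce local constancy in (iii) from the base-change isomorphism together with the fact that each $[\HL^i(M)]_\nu$ is a finitely generated $A$-module whose localization over $\mathfrak{Fib}(M)$ is free.

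For part (ii), however, you take a different and heavier route than the paper, and your claim that the generic-freeness machinery of \cite{GEN_FREENESS_LOC_COHOM} is ``genuinely required'' is not correct. The paper instead bootstraps directly from part (i): since $A$ is generically reduced, each localization $A_{\pp_i}$ at a minimal prime is a field, and any finitely generated graded module over a field base is trivially fiber-full; hence every minimal prime already lies in $\mathfrak{Fib}(M)$. Writing $\mathfrak{Fib}(M) = \Spec(A) \setminus V(I)$ by part (i), prime avoidance produces $a \in I$ missing all minimal primes, so $D(a) \subset \mathfrak{Fib}(M)$, and the projectivity of $\HL^i(M) \otimes_A A_a$ then follows from \autoref{thm_freeness_criterion} applied locally. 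This avoids importing the full generic-freeness theorem for local cohomology. Your approach does work (assuming the cited result), but it obscures the point that (ii) is an elementary consequence of (i) plus the trivial observation that over a field every module is fiber-full.
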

\begin{proof}
	(i) 
	It is a known result that for a finitely generated graded $R$-module $L$, the following locus 
	$$
	U_L \; := \; \big\{ \pp \in \Spec(A) \mid  L \otimes_{A} A_\pp \text{ is a free } A_\pp\text{-module} \big\}
	$$
	is an open subset of $\Spec(A)$ (see, e.g., \cite[Lemma 2.5, Notation 2.6]{GEN_FREENESS_LOC_COHOM}).
	Similarly to \autoref{setup_local_criterion}, we choose a positively graded polynomial ring $T = A[x_1,\ldots,x_r]$ over $A$ such that we have a homogeneous surjection $T \twoheadrightarrow R$.
	Then by \autoref{thm_freeness_criterion} we obtain the equality
	$$
	\mathfrak{Fib}(M) \; = \; U_M \, \cap \, U_{\Hom_T(M, T)} \,\cap\, U_{\Ext_T^1(M, T)} \,\cap\, \cdots \,\cap\, U_{\Ext_T^r(M, T)},
	$$
	and so the result follows.
	
	(ii)
	Suppose that $A$ is generically reduced.
	Let $\{\pp_1,\ldots,\pp_k\}$ be the minimal primes of $A$.
	By part (i), we choose an ideal $I \subset A$ such that $\mathfrak{Fib}(M) = \Spec(A) \setminus V(I)$. 
	As $A$ is generically reduced, $A_{\pp_i}$ is a field for any minimal prime $\pp_i$, and so it is clear that $M \otimes_{A} A_{\pp_i}$ is fiber-full over $A_{\pp_i}$. 
	This implies that $\pp_i \not\supseteq I$.
	Finally, the prime avoidance lemma gives an element $a \in A$ such that $a \in I \setminus (\pp_1 \cup \cdots \cup \pp_k)$.

	(iii)
	Fix any $i \ge 0$ and $\nu \in \ZZ$.
	Let $\pp \in \mathfrak{Fib}(M)$.
	Since $\mathfrak{Fib}(M) \subset \Spec(A)$ is an open subset by part (i), we can choose $0 \neq a \in A$ such that $\pp \in \Spec(A_a) \subset \mathfrak{Fib}(M)$.
	Then, by using \autoref{thm_freeness_criterion}, we obtain that $[\HL^i(M)]_\nu  \otimes_{A} A_a$ is a finitely generated locally free $A_a$-module, and the base change isomorphism 
	$$
	\HL^i(M \otimes_A k(\qqq)) \;\cong\; \HL^i(M) \otimes_A k(\qqq) 
	$$
	for all $\qqq \in \Spec(A_a)$.
	By invoking \cite[\href{https://stacks.math.columbia.edu/tag/00NX}{Tag 00NX}]{stacks-project} there exists an open subset $V \subset \Spec(A_a)$ containing $\pp$ such that the function
	$$
	\Spec(A) \rightarrow \NN, \quad \qqq \;\mapsto\; \dim_{k(\qqq)}\left( \left[\HL^i(M)\right]_\nu \otimes_{A} k(\qqq) \right)
	$$
	is constant on $V$.
	Therefore, the proof is complete.
\end{proof}

\section{Square-free Gr\"obner degenerations \`a la Conca-Varbaro}
\label{sect_grob_degen}

In this section, we give our alternative proof of the main result of \cite{CONCA_VARBARO}.
Another alternative proof of this result was given in \cite{CONFERENCE_LEVICO}.

Let $\kk$ be a field, $S = \kk[x_1,\ldots,x_r]$ be a positively graded polynomial ring and $\mm = (x_1,\ldots,x_r) \subset S$.
Let $A = \kk[t]$ be a polynomial ring.
Let $R = A \otimes_\kk S \cong S[t]$ be a polynomial with grading induced from $S$; i.e., $x_i \in R$ maintains the degree as an element of $S$ and $\deg(t) = 0$. 
Let $<$	be a term order on $S$.

Our proof of the following result is a simple consequence of our previous developments.

\begin{theorem}[Conca-Varbaro]
	\label{thm_grobner_deform}	
	Let $I \subset S$ be a homogeneous ideal.
	If the initial ideal $\iniTerm_<(I)$ is square-free, then 
	$$
	\dim_\kk\left(\left[\HL^i(S/I)\right]_\nu\right) \; = \; \dim_\kk\left(\left[\HL^i(S/\iniTerm_<(I))\right]_\nu\right)
	$$
	for all $i \ge 0 $ and $\nu \in \ZZ$.
\end{theorem}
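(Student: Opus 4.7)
The plan is to interpret the Gr\"obner deformation as a flat family over $A = \kk[t]$ and apply the fiber-full machinery of \autoref{thm_locus}. Concretely, fix a weight vector refining $<$ on the generators of $I$, let $\widetilde{I} \subset R = S[t]$ be the $t$-homogenisation of $I$, and set $M := R/\widetilde{I}$. Standard Gr\"obner theory gives that $M$ is a free $A$-module with $M \otimes_A \kk[t]/(t) \cong S/\iniTerm_<(I)$ and $M \otimes_A \kk[t]/(t-1) \cong S/I$. Moreover, inverting $t$ trivialises the family (by the usual rescaling of the variables), so $M \otimes_A A_\pp \cong (S/I) \otimes_\kk A_\pp$ for every prime $\pp \neq (t)$, and such a constant family is manifestly fiber-full over $A_\pp$. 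Thus $\Spec(A) \setminus \{(t)\} \subset \mathfrak{Fib}(M)$, and the entire proof reduces to showing $(t) \in \mathfrak{Fib}(M)$: combined with \autoref{thm_locus}(iii) and the connectedness of $\Spec(\kk[t])$, this would force the function $\pp \mapsto \dim_{k(\pp)}[\HL^i(M \otimes_A k(\pp))]_\nu$ to be constant, and evaluating at $(t)$ and $(t-1)$ gives the claim.

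Localising at $(t)$ places us in \autoref{setup_local_criterion} with $B = A_{(t)}$ and $\bb = (t)$. By the equivalence $(1) \Leftrightarrow (5)$ of \autoref{thm_freeness_criterion}, fiber-fullness at $(t)$ is equivalent to the surjectivity
$$
\HL^i\bigl(R/(\widetilde{I} + t^q R)\bigr) \;\twoheadrightarrow\; \HL^i(S/J) \qquad \text{for all } i \ge 0,\ q \ge 1,
$$
where $J := \iniTerm_<(I)$. This is the heart of the matter and the only place the squarefreeness of $J$ can enter; it is the main obstacle of the proof.

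To handle it, my strategy is a second Gr\"obner-type degeneration combined with a polarisation argument. With a suitable extension of the weight order, the initial ideal of $\widetilde{I} + (t^q)$ is exactly $J + (t^q)$, so an auxiliary flat family over $\kk[s]$ degenerates $R/(\widetilde{I} + t^q R)$ to $R/(J + t^q R)$. The target splits as $R/(J + t^q R) \cong (S/J) \otimes_\kk \kk[t]/(t^q)$, whose local cohomology is $\HL^i(S/J) \otimes_\kk \kk[t]/(t^q)$ and surjects onto $\HL^i(S/J)$ via $\kk[t]/(t^q) \twoheadrightarrow \kk$. To propagate this surjection back to $R/(\widetilde{I} + t^q R)$, I polarise: replacing $t^q$ by the squarefree monomial $t_1 t_2 \cdots t_q$ in the enlarged polynomial ring $\widetilde{R} = S[t_1,\ldots,t_q]$ turns $J + (t_1 \cdots t_q)$ into a squarefree monomial ideal, so that $\widetilde{R}/(J + t_1 \cdots t_q)$ is Stanley--Reisner. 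Polarisation preserves the graded Hilbert function of local cohomology (via the regular sequence of linear forms $t_i - t_1$, in Sbarra's sense), and on the Stanley--Reisner side Hochster's formula furnishes the required compatible surjection. A further application of the fiber-full criterion in the polarised setting then lifts the surjection back upstairs to the desired $\HL^i(R/(\widetilde{I} + t^q R)) \twoheadrightarrow \HL^i(S/J)$.

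With fiber-fullness at $(t)$ secured, $\mathfrak{Fib}(M) = \Spec(A)$ and \autoref{thm_locus}(iii) completes the proof.
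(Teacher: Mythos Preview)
Your overall strategy matches the paper's exactly: set up the Gr\"obner family $M = R/\widetilde I$ over $A = \kk[t]$, reduce everything to showing $(t) \in \mathfrak{Fib}(M)$, and then invoke \autoref{thm_locus}(iii) on the connected spectrum. At the key step the paper simply cites \cite[Proposition~2.3]{CONCA_VARBARO} or \cite[Corollary~3.4]{CONFERENCE_LEVICO} for fiber-fullness at $(t)$, and you are right that this is ``the heart of the matter.''

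The gap is in your argument for that step. Your second Gr\"obner degeneration over $\kk[s]$, from $R/(\widetilde I + (t^q))$ to $R/(J + (t^q))$, does not by itself produce the surjection $\HL^i(R/(\widetilde I + (t^q))) \twoheadrightarrow \HL^i(S/J)$ that criterion~(5) demands. Even if that auxiliary family were fiber-full at $(s)$, \autoref{thm_locus}(iii) would only give equality of Hilbert functions of the two fibres, not surjectivity of the specific natural map you need. And to show the auxiliary family is fiber-full you face the same problem one level down, now with special fibre $R/(J+(t^q))$ whose defining ideal is \emph{not} square-free --- so you cannot recurse on the theorem. Polarisation does not close this loop: it is only defined for the monomial ideal $J+(t^q)$, not for $\widetilde I+(t^q)$, and Sbarra's statement is purely numerical (equality of local-cohomology Hilbert functions), providing no map back to $R/(\widetilde I + (t^q))$. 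Your last sentence, ``a further application of the fiber-full criterion in the polarised setting then lifts the surjection back upstairs,'' is precisely where the argument would have to do real work, and as written it does not.

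What the cited results actually use is the single observation that
\[
\sqrt{\widetilde I + (t^q)} \;=\; \sqrt{\widetilde I + (t)} \;=\; J + (t),
\]
together with \emph{cohomological fullness} of the Stanley--Reisner ring $R/(J+(t))$: because $J+(t)$ is a square-free monomial ideal, the natural map $\HL^i(R/\mathfrak a) \to \HL^i\bigl(R/\sqrt{\mathfrak a}\bigr)$ is surjective for every ideal $\mathfrak a$ with that radical (equivalently, the dual $\Ext$ map is injective). Taking $\mathfrak a = \widetilde I + (t^q)$ gives the required surjection onto $\HL^i(S/J)$ in one stroke, with no auxiliary degeneration and no polarisation. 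That is the missing idea in your proposal.
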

\begin{proof}
		We can choose a weight vector $\omega \in \NN^{r}$ such that the $\omega$-homogenization $J = \hom_\omega(I) \subset R$ satisfies the following conditions:
		\begin{enumerate}[(i)]
			\item $J$ is an $R$-homogeneous ideal,
			\item $R/J$ is a free $A$-module,
			\item $R/J \otimes_{A} A/(t) \cong S / \iniTerm_<(I)$ and,
			\item $R/J \otimes_{A} \kk(t) \cong S / I \otimes_{A} \kk(t)$
		\end{enumerate}
		(see, e.g., \cite[\S 15.8]{EISEN_COMM}).
		The residue fields of the prime ideals $(0) \subset A$ and $(t) \subset A$ are given by $k\big((0)\big) = \kk(t)$ and $k\big((t)\big) = \kk$, respectively.
		We have that $R/J \otimes_{A} A_{(t)}$ is fiber-full over $A_{(t)}$ by \cite[Proposition 2.3]{CONCA_VARBARO} or \cite[Corollary 3.4]{CONFERENCE_LEVICO}.
		Finally, \autoref{thm_locus} implies that 
		\begin{align*}
			\dim_\kk\left(\left[\HL^i(S/I)\right]_\nu\right)  & \; = \;  \dim_{\kk(t)}\left(\left[\HL^i\big(R/J \otimes_{A} \kk(t)\big)\right]_\nu\right) \\
			& \; = \;  \dim_{\kk}\left(\left[\HL^i\big(R/J \otimes_{A} A/(t)\big)\right]_\nu\right) \\
			 & \; = \;
			  \dim_\kk\left(\left[\HL^i(S/\iniTerm_<(I))\right]_\nu\right)
		\end{align*}
		for all $i \ge 0 $ and $\nu \in \ZZ$.
\end{proof}

For a finitely generated graded $S$-module $M$, we denote the Castelnuovo-Mumford regularity and the depth of $M$ by $\reg(M)$ and $\depth(M)$, respectively, and a non-zero Betti number $\beta_{i,i+j}(M) := \dim_{\kk}\big(\left[\Tor_i^S(\kk, M)\right]_{i+j}\big)$ is called extremal if $\beta_{h,h+k}(M)=0$ for every $h \ge i, k \ge j$ with $(h,k) \neq (i,j)$.

\begin{corollary}
	Let $I \subset S$ be a homogeneous ideal.
	If $\iniTerm_<(I)$ is square-free, then $S/I$ and $S/\iniTerm_<(I)$ have the same extremal Betti numbers, and, in particular, 
	$$
	\depth(S/I) = \depth(S/\iniTerm_<(I)) \quad \text{ and } \quad \reg(S/I) = \reg(S/\iniTerm_<(I)).
	$$
\end{corollary}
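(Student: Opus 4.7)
\medskip

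\emph{Proof plan.} The plan is to bootstrap \autoref{thm_grobner_deform} to the statement about extremal Betti numbers through graded local duality, and then to deduce the depth and regularity equalities as straightforward corollaries. By \autoref{thm_grobner_deform}, for every $i \ge 0$ and every $\nu \in \ZZ$ we have
$$
\dim_\kk\left(\left[\HL^i(S/I)\right]_\nu\right) \;=\; \dim_\kk\left(\left[\HL^i(S/\iniTerm_<(I))\right]_\nu\right).
$$
Graded local duality over the Cohen--Macaulay ${}^*$local ring $S$ provides a canonical $\kk$-isomorphism
$
\left[\Ext_S^{r-i}(M, S(-\delta))\right]_{-\nu} \;\cong\; \Hom_\kk\left(\left[\HL^i(M)\right]_\nu, \kk\right)
$
for any finitely generated graded $S$-module $M$, so passing to Matlis duals I would obtain equally that the Hilbert functions of $\Ext_S^{j}(S/I,S)$ and $\Ext_S^{j}(S/\iniTerm_<(I),S)$ coincide for every $j$.

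Next, I would invoke the Bayer--Charalambous--Popescu characterization of extremal Betti numbers: a Betti number $\beta_{i,i+j}(M)$ is extremal if and only if the corresponding ``corner'' position in the $\Ext$-table is extremal, and in that case
$$
\beta_{i,i+j}(M) \;=\; \dim_\kk\left(\left[\Ext_S^{i}(M,S)\right]_{-i-j}\right).
$$
Both the property of being an extremal corner and the numerical value at that corner depend only on the collection of Hilbert functions $\{\dim_\kk [\Ext_S^j(M, S)]_\nu\}_{j,\nu}$. Since this collection is invariant under passing from $S/I$ to $S/\iniTerm_<(I)$ by the paragraph above, the extremal Betti numbers of the two modules coincide.

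For the ``in particular'' clauses I would use the standard cohomological formulas
$$
\depth(M) \;=\; \min\{\, i \ge 0 \mid \HL^i(M) \ne 0 \,\}, \qquad \reg(M) \;=\; \max\{\, i + \nu \mid \left[\HL^i(M)\right]_\nu \ne 0 \,\},
$$
both of which are determined by the Hilbert functions of the local cohomology modules and are therefore preserved by \autoref{thm_grobner_deform}. The main (minor) obstacle is only the invocation of the Bayer--Charalambous--Popescu theorem to go from the agreement of local cohomology Hilbert functions to agreement of extremal Betti numbers; once that black box is in place, the rest of the argument is a direct unpacking of definitions.
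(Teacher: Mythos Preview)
Your proposal is correct and follows essentially the same route as the paper: invoke \autoref{thm_grobner_deform} for the equality of local cohomology Hilbert functions, then appeal to the known dictionary between these and extremal Betti numbers. The paper compresses your local-duality-plus-Bayer--Charalambous--Popescu step into a single citation of \cite[Theorem 3.11]{SCHENZEL_NOTES}, which records exactly this correspondence, so the only difference is the choice of black-box reference.
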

\begin{proof}
	It follows from \autoref{thm_grobner_deform} and \cite[Theorem 3.11]{SCHENZEL_NOTES}.
\end{proof}

\begin{bibdiv}
\begin{biblist}

\bib{ALTMAN_KLEIMAN_COMPACT_PICARD}{article}{
      author={Altman, Allen~B.},
      author={Kleiman, Steven~L.},
       title={Compactifying the {P}icard scheme},
        date={1980},
     journal={Adv. in Math.},
      volume={35},
      number={1},
       pages={50\ndash 112},
}

\bib{Brodmann_Sharp_local_cohom}{book}{
      author={Brodmann, M.~P.},
      author={Sharp, R.~Y.},
       title={Local cohomology.},
     edition={Second},
      series={Cambridge Studies in Advanced Mathematics},
   publisher={Cambridge University Press, Cambridge},
        date={2013},
      volume={136},
        note={An algebraic introduction with geometric applications},
}

\bib{BRUNS_HERZOG}{book}{
      author={Bruns, Winfried},
      author={Herzog, J\"urgen},
       title={Cohen-{M}acaulay rings},
     edition={2},
      series={Cambridge Studies in Advanced Mathematics},
   publisher={Cambridge University Press},
        date={1998},
}

\bib{CHARDIN_POWERS_IDEALS}{article}{
      author={Chardin, Marc},
       title={Powers of ideals and the cohomology of stalks and fibers of
  morphisms},
        date={2013},
        ISSN={1937-0652},
     journal={Algebra Number Theory},
      volume={7},
      number={1},
       pages={1\ndash 18},
}

\bib{GEN_FREENESS_LOC_COHOM}{article}{
      author={Chardin, Marc},
      author={Cid-Ruiz, Yairon},
      author={Simis, Aron},
       title={Generic freeness of local cohomology and graded specialization},
        date={2022},
        ISSN={0002-9947},
     journal={Trans. Amer. Math. Soc.},
      volume={375},
      number={1},
       pages={87\ndash 109},
         url={https://doi.org/10.1090/tran/8316},
      review={\MR{4358663}},
}

\bib{CONCA_VARBARO}{article}{
      author={Conca, Aldo},
      author={Varbaro, Matteo},
       title={Square-free {G}r\"{o}bner degenerations},
        date={2020},
     journal={Invent. Math.},
      volume={221},
      number={3},
       pages={713\ndash 730},
}

\bib{COHOM_FULL_RINGS}{article}{
      author={Dao, Hailong},
      author={De~Stefani, Alessandro},
      author={Ma, Linquan},
       title={{Cohomologically Full Rings}},
        date={201910},
        ISSN={1073-7928},
     journal={International Mathematics Research Notices},
        note={rnz203},
}

\bib{EISEN_COMM}{book}{
      author={Eisenbud, David},
       title={Commutative algebra with a view towards algebraic geometry},
      series={Graduate Texts in Mathematics, 150},
   publisher={Springer-Verlag},
        date={1995},
}

\bib{HARTSHORNE}{book}{
      author={Hartshorne, Robin},
       title={Algebraic geometry},
   publisher={Springer-Verlag, New York-Heidelberg},
        date={1977},
        note={Graduate Texts in Mathematics, No. 52},
}

\bib{HOCHSTER_ROBERTS_PURITY}{article}{
      author={Hochster, Melvin},
      author={Roberts, Joel~L.},
       title={The purity of the {F}robenius and local cohomology},
        date={1976},
     journal={Advances in Math.},
      volume={21},
      number={2},
       pages={117\ndash 172},
}

\bib{HYRY_MULTIGRAD}{article}{
      author={Hyry, Eero},
       title={The diagonal subring and the {C}ohen-{M}acaulay property of a
  multigraded ring},
        date={1999},
     journal={Trans. Amer. Math. Soc.},
      volume={351},
      number={6},
       pages={2213\ndash 2232},
}

\bib{kollar2014maps}{article}{
      author={Koll\'ar, Janos},
       title={Maps between local picard groups},
        date={2014},
     journal={arXiv preprint arXiv:1407.5108},
}

\bib{KOLLAR_KOVACS}{article}{
      author={Koll\'{a}r, J\'{a}nos},
      author={Kov\'{a}cs, S\'{a}ndor~J.},
       title={Deformations of log canonical and {$F$}-pure singularities},
        date={2020},
     journal={Algebr. Geom.},
      volume={7},
      number={6},
       pages={758\ndash 780},
}

\bib{LONSTED_KLEIMAN}{article}{
      author={L{\o}nsted, Knud},
      author={Kleiman, Steven~L.},
       title={Basics on families of hyperelliptic curves},
        date={1979},
     journal={Compositio Math.},
      volume={38},
      number={1},
       pages={83\ndash 111},
}

\bib{MATSUMURA}{book}{
      author={Matsumura, Hideyuki},
       title={Commutative ring theory},
     edition={1},
      series={Cambridge Studies in Advanced Mathematics volume 8},
   publisher={Cambridge University Press},
        date={1989},
}

\bib{CONFERENCE_LEVICO}{book}{
      author={Polini, Claudia},
      author={Raicu, Claudiu},
      author={Varbaro, Matteo},
      author={Walker, Mark~E.},
       title={Recent developments in commutative algebra},
      series={Lecture Notes in Mathematics},
   publisher={Springer, Cham},
        date={[2021] \copyright 2021},
      volume={2283},
        note={Edited by Aldo Conca, Srikanth B. Iyengar and Anurag K. Singh,
  Centro Internazionale Matematico Estivo (C.I.M.E.) Summer Schools},
}

\bib{SCHENZEL_NOTES}{incollection}{
      author={Schenzel, Peter},
       title={On the use of local cohomology in algebra and geometry},
        date={1998},
   booktitle={Six lectures on commutative algebra ({B}ellaterra, 1996)},
      series={Progr. Math.},
      volume={166},
   publisher={Birkh\"auser, Basel},
       pages={241\ndash 292},
}

\bib{KSMITH}{article}{
      author={Smith, Karen~E.},
       title={Local cohomology and base change},
        date={2018},
        ISSN={0021-8693},
     journal={J. Algebra},
      volume={496},
       pages={11\ndash 23},
}

\bib{stacks-project}{misc}{
      author={{Stacks project authors}, The},
       title={The stacks project},
         how={\url{https://stacks.math.columbia.edu}},
        date={2021},
}

\bib{HONGMIAO}{article}{
      author={Yu, Hongmiao},
       title={{$N$}-fiber-full modules},
        date={2022},
        ISSN={0022-4049},
     journal={J. Pure Appl. Algebra},
      volume={226},
      number={4},
       pages={Paper No. 106899, 13},
         url={https://doi.org/10.1016/j.jpaa.2021.106899},
      review={\MR{4308075}},
}

\end{biblist}
\end{bibdiv}

\end{document}